\newtheorem{thm}{Theorem}[section]
\newtheorem{lem}{Lemma}[section]
\theoremstyle{definition}
\newtheorem{rem}{Remark}[section]
\newtheorem{example}{Example}
\numberwithin{equation}{section}
\title{\bf  A Computationally Efficient Finite Element Method for Shape Reconstruction of Inverse Conductivity Problems}
\author{Lefu Cai \thanks{School of Mathematics, Harbin Institute of Technology, Harbin, P. R. China. Email: {\tt 25B312009@stu.hit.edu.cn}},
\and Zhixin Liu\thanks{School of Mathematics, Harbin Institute of Technology, Harbin, P. R. China. Email: {\tt zxliu24@163.com}},
\and Minghui Song \thanks{School of Mathematics, Harbin Institute of Technology, Harbin, P. R. China. Email: {\tt songmh@hit.edu.cn}},
\and Xianchao Wang\thanks{School of Mathematics, Harbin Institute of Technology, Harbin, P. R. China. Email: {\tt xcwang90@gmail.com}}}
\date{} 
\begin{document}
\maketitle

\begin{abstract}

The inverse conductivity problem aims at determining the unknown conductivity inside a bounded domain from boundary measurements. In practical applications, algorithms based on minimizing a regularized residual functional subject to PDE constraints have been widely used to deal with this problem.  However, such approaches typically require repeated iterations and solving the forward problem at each iteration, which leads to a heavy computational cost. To address this issue, we first reformulate the inverse conductivity problem as a minimization problem involving a regularized residual functional. We then transform this minimization problem into a variational problem and establish the equivalence between them. This reformulation enables the employment of the finite element method to reconstruct the shape of the object from finitely many measurements. Notably, the proposed approach allows us to identify the object directly without requiring any iterative procedure.  {\it A prior} error estimates are rigorously established to demonstrate the theoretical soundness of the finite element method. Based on these estimates, we provide a criterion for selecting the regularization parameter.  Additionally, several numerical examples are presented to verify the feasibility of the proposed approach in shape reconstruction.

\medskip

\medskip

\noindent{\bf Keywords:}~~inverse conductivity problem,  shape reconstruction, finite element method,  error estimates, finitely many measurements


\end{abstract}

\section{Introduction}
This paper is concerned with an inverse conductivity problem for the elliptic partial differential equation in a bounded domain, namely, to determine the shape of the conductivity coefficient from boundary measurements. This type of  problem has wide applications in electrical impedance tomography (EIT), including lungs ventilation\cite{GATIJMG},  breast tissue imaging\cite{IF, Ammari2007},  and brain imaging\cite{Badia}.  Next, we present the mathematical formulation of the inverse conductivity problem for our study.

Let $\Omega$  be a bounded open connected domain of $\mathbb{R}^d$, $d\geq 2$, with a smooth boundary $\partial \Omega$ and  an outer normal vector $\nu$.  We consider  the following elliptic equation with a Neumann boundary condition
\begin{equation}\label{PDE}
\left\{
\begin{aligned}
&\nabla\cdot(\sigma\nabla u)=0            &\mbox{in}&\enspace\Omega,\\
&\sigma\frac{\partial u}{\partial\nu}=g  &\mbox{on}&\enspace\partial\Omega,
\end{aligned}\right.
\end{equation}
where $u$ represents the electric potential,   $\sigma$ is the isotropic electric conductivity,  and $g$ signifies the electric current density.
The forward problem of \eqref{PDE} is to determine the electric potential $u$ for a given conductivity $\sigma$ and boundary input $g$.
We  define the Neumann-to-Dirichlet (NtD) operator by 
\begin{equation*}
\Lambda(\sigma):g\mapsto u|_{\partial\Omega}.
\end{equation*}
The classical {\it Calder\'on problem} consists in reconstructing the unknown conductivity $\sigma$ from the NtD operator $\Lambda(\sigma)$.
It is well known that  the {\it Calder\'on problem} is a highly nonlinear and severely ill-posed inverse problem, and its reconstruction requires infinite-dimensional boundary measurements.
To incorporate the case of finitely many measurements, we assume that the Galerkin projection of $\Lambda(\sigma)$ onto the dual space of span$\{g_1,\dots,g_m\}$ is available, that is, we can measure the symmetric matrix
\begin{equation*}
F(\sigma)=\left(\int_{\partial \Omega}\,  g_i \Lambda (\sigma ) g_j \,  \mathrm{d}s\right)_{i,j=1}^m \in \mathbb{S}_m \subset \mathbb{R}^{m\times m}.
\end{equation*}
Furthermore, we assume that the  unknown conductivity  admits a piecewise-constant on a given resolution, i.e.,   $\sigma = (\sigma_1,\dots,\sigma_M)\in \mathbb{R}_+^M$.
Hence, the inverse problem under consideration is to determine the conductivity vector $\sigma \in \mathbb{R}_+^M$ from the finitely many measurements $F(\sigma)\in \mathbb{R}^{m\times m}$, namely,
 \begin{equation*}
   F(\sigma)\in \mathbb{R}^{m\times m} \longmapsto \sigma\in \mathbb{R}_+^M.
 \end{equation*}

Theoretically, the uniqueness results for inverse conductivity problems have been extensively investigated in the infinite-dimensional setting,  that is,  recovering the unknown conductivity function exactly from infinitely many measurements \cite{ KL, JG}.
 Later, Harrach \cite{HarrachGlobal2019} demonstrated that the conductivity coefficient can be uniquely determined from finitely many measurements by employing the Runge approximation.
 Recently, Fang, Deng, and Liu \cite{Liu2021} showed that the location of a conductive rod can be determined from a single measurement.
Due to the lack of continuous dependence of the solution on the data, the inverse problem is generally unstable, and typically only logarithmic-type stability estimates can be established under standard {\it a priori} assumptions on the conductivity \cite{Beretta1998Stable}.  Furthermore, considerable effort has been devoted to investigating Lipschitz stability under restrictive assumptions on the admissible set of conductivities.
The first Lipschitz stability result for the inverse conductivity problem was established by Alessandrini and Vessella \cite{Alessandrini2005}, who employed all Cauchy data pairs of solutions. This was later improved in \cite{HarrachGlobal2019}, which demonstrated that even a finite number of Cauchy data pairs is sufficient.
Most recently, Hanke \cite{Hanke2024Lipschitz} proved Lipschitz stability for the inverse conductivity problem with only two Cauchy data pairs, and showed that a single pair is sufficient in the case of a polygonal conductivity inclusion.

Numerical reconstruction approaches for the inverse conductivity problem are typically based on minimizing a regularized data-fitting functional.
The minimization problem is formulated as the minimization of a residual functional with a regularization term, subject to a PDE constraint in the form of an elliptic equation\cite{Jin2021, Chen1999}.
A Newton-type method is usually employed to solve the minimization problem, while the finite element method is used to handle the PDE constraint.
Due to the non-convexity of the objective functional, the regularized data-fitting technique usually suffers from local convergence. To overcome this difficulty, Harrach and Minh \cite{Harrach2016Enhancing} proposed a monotonicity-based regularization approach, in which the monotonicity relation serves as a specialized regularizer.  Interested readers could also refer to the globally convergent algorithms\cite{Klibanov2019, Klibanov2025} and dynamical regularization algorithm\cite{Zhang2018}.  Actually,  the aforementioned approaches typically require many iterative steps to converge to a satisfactory result.
To avoid repeated iterations, Huhtala, Bossuyt, and Hunnukainen \cite{ASA} reformulated the minimization problem for the inverse source problem of the Poisson equation as an equivalent variational problem, which was then solved using the finite element method.  Subsequently, this methodology  had  been extended to the inverse source problem for  biharmonic equation \cite{MD}.
However, this finite-element based method cannot be directly applied to the nonlinear inverse conductivity problem, as the PDE constraint cannot be explicitly incorporated into the objective functional. Therefore,  it is of significant interest to develop a non-iterative finite-element based approach for solving nonlinear inverse problems.

In this paper, we propose a novel shape reconstruction approach for the inverse conductivity problem with finitely many measurements, which incorporates the finite element method.  We first reformulate the inverse problem as a minimization problem under a linearized regularized residual functional. To this end, we employ a single-step linearization to establish the connection between the NtD operator and its Fr\'echet derivative in quadratic form \cite{Harrach2010Linearization}, which allows the residual functional to be expressed in terms of their difference. Importantly, this residual functional implicitly enforces the PDE solution, so no additional PDE constraint is required. To address the inherent ill-posedness, we incorporate a Frobenius-norm residual with an additional Tikhonov regularization term.  Next, we transform the minimization problem into a variational formulation and establish their equivalence, where the symmetric bilinear form is defined through the trace of a finite-dimensional matrix. The existence and uniqueness of the variational solution are then proved using the Lax-Milgram theorem.
Finally,  we establish a rigorous {\it a priori} error estimates for the inversion scheme, comprehensively accounting for both the reconstruction and discretization errors.  Interested readers could refer to the error estimates in \cite{Chen2022, Jin2021, Ammari, Griesmaier, B2} and the references therein for further details on inverse conductivity problems.  Since only finitely many measurements are available, we show that the reconstructed object lies in a finite-dimensional space and construct an orthonormal basis for it.  Consequently, the reconstruction error reduces to the error in the coefficients. However,  the Galerkin projection error typically cannot be rigorously estimated due to the intrinsic loss of information associated with finite-dimensional measurements.
To address this challenge,  we demonstrate that,  by selecting input functions from a trigonometric basis, a rigorous error bound can be established by the properties of Zernike polynomials.

The promising features of our proposed finite element method can be summarized in three aspects. First, although we present the regularization term in the $L^2$-norm for convenience, our error-estimation framework is sufficiently general to accommodate a broad class of regularization terms defined over various Sobolev spaces. This flexibility enables the consideration of diverse smoothness properties and structural assumptions regarding the conductivity distribution. Second, conventional finite element-based approaches to nonlinear inverse problems typically require repeated iterations to solve both the forward and inverse problems,  whereas our method directly reconstructs the shape of the unknown conductivity without any iterative procedure.  Finally, a major challenge in regularized inversion lies in the selection of the regularization parameter. Our error estimate provides a clear and quantitative criterion for this choice, indicating that the regularization parameter depends on both the number of measurements and the noise level.  Hence, our method could enhance the reliability and reproducibility of the reconstruction.

The structure of this paper is as follows. In section 2, we start with some fundamental mathematical theory concerning the NtD operator as well as its Fr\'{e}chet derivative. We then demonstrate the equivalence between a  regularized minimizing problem  and  a variational formulation.  Section 3 provides a comprehensive error analysis,  including both the reconstruction error and the discretization error. Based on these estimates, we propose a rule for selecting the regularization parameter that depends only on the number of measurement data and the noise level.
Finally, several numerical examples involving various geometric shapes are presented to verify our theoretical results in Section 4.  

\section{Minimization problem and its variational formulation}
In this section, we introduce a finite element method to determine the shape of an unknown conductivity. We first utilize a minimization problem based on a one-step linearization to characterize the inverse conductivity problem. Then we reformulate this minimization problem  as a variational problem, and establish  the equivalence between the two formulations. Finally, the variational problem is solved using the finite element method. Before our discussions, we introduce the necessary notations and Sobolev spaces.

In order to guarantee the uniqueness of  equation \eqref{PDE}, we assume that the electric potential $u$ has zero mean on $\partial \Omega$, i.e., $\int_{\partial \Omega} u \mathrm{d}s=0$. To this end, we define the zero-mean subspaces of $L^2(\partial\Omega)$ and $H^1(\Omega)$ as
\begin{equation*}\label{space1}
\!\!  L^2_{\diamond}(\partial\Omega)=\left\{f\in L^2(\partial\Omega):\int_{\partial\Omega}f\, \mathrm{d}s=0\right\},\quad
H^1_{\diamond}(\Omega)=\left\{u\in H^1(\Omega):\int_{\partial\Omega}u\, \mathrm{d}s=0\right\}.
\end{equation*}
We further assume that $L^{\infty}_+(\Omega)$ denotes the subspace of $L^{\infty}(\Omega)$ consisting of functions with positive essential infima. For each $\sigma\in L^{\infty}_+(\Omega)$,
 equation \eqref{PDE} admits a unique weak solution by the Lax-Milgram theorem, i.e.,
$u_g^\sigma\in H^1_{\diamond}(\Omega)$ solves
\begin{equation}\label{eq:weak-solution}
\nabla\cdot(\sigma\nabla u_g^\sigma)=0, \ \   \text{in}\  \Omega,  \quad \sigma \partial_{\nu} u_g^\sigma=g, \ \  \text{on}\ \partial\Omega.
\end{equation}
Thus, there exists a one-to-one relation between $g$ and $u_g^\sigma|_{\partial\Omega}$, which together form a Cauchy pair. On this basis,  the NtD operator $\Lambda(\sigma)\in \mathcal{L}\left(L^2_{\diamond}(\partial\Omega)\right) $ in weak form is given by
\begin{equation*}
\Lambda(\sigma):  g\in L^2_{\diamond}(\partial\Omega) \longmapsto u_g^\sigma|_{\partial\Omega}\in L^2_{\diamond}(\partial\Omega).
\end{equation*}
It is well known  that $\Lambda(\sigma)$  is a self-adjoint, linear, bounded, and compact operator \cite{A}.  For $g\in  L^2_{\diamond}(\partial\Omega)$, the quadratic form of $\Lambda(\sigma)$ is given by
\begin{equation*}
\langle g,\Lambda(\sigma)g \rangle =\int_{\partial\Omega} g \Lambda(\sigma)  g \, \mathrm{d}s.
\end{equation*}
It is noted that  the mapping $\sigma\rightarrow\Lambda(\sigma)$ is Fr\'echet differentiable\cite{Lechleiter2008},  and the Fr\'echet derivative $\Lambda'(\sigma)$ at $\sigma$ in the direction $\kappa$ is given by $(\Lambda'(\sigma)\kappa)g=v|_{\partial\Omega}$, where $v\in H^1_{\diamond}(\Omega)$ solves
\begin{equation*}
\nabla\cdot(\sigma\nabla v)=-\nabla\cdot(\kappa\nabla u_g^\sigma), \  \  \text{in}\ \Omega, \quad   \sigma \partial_{\nu} v=-\kappa \partial_{\nu} u_g^\sigma, \ \ \text{on} \ \partial\Omega.
\end{equation*}
and $u_g^\sigma$ is the solution of \eqref{eq:weak-solution}. Using the integration-by-parts formula, the derivative $\Lambda'(\sigma)\kappa$ can also be represented in the quadratic form
\begin{equation}\label{eq:frechet}
\langle g,(\Lambda'(\sigma)\kappa)g \rangle=-\int_\Omega\kappa\left|
\nabla u_g^\sigma\right|^2 \mathrm{d}x.
\end{equation}

Next,  we present the minimization problem for identifying the shape of  unknown conductivity  based on a one-step linearization.
For simplicity, we assume that the background conductivity $\sigma_0\equiv 1$, which is known as {\it a priori} information.
To reconstruct the contrast   $\sigma-1$, we compare the operator $\Lambda(\sigma)$ with the background operator $\Lambda(\sigma_0)$, corresponding to a known background conductivity $\sigma_0 = 1$.
To this end, we apply the  one-step linearization approach proposed in \cite{Harrach2010Linearization}. The key idea is that,  if $\kappa$ is an exact solution to
\begin{equation}\label{equ}
\Lambda'(1)\kappa=\Lambda(\sigma)-\Lambda(1),
\end{equation}
then
\begin{equation*}
\text{supp} \, \kappa=\text{supp}\{\sigma-1\}.
\end{equation*}
For  finitely many measurements, a discretized version of the relation \eqref{equ}  is given by
\begin{equation*}
F'(1)\kappa=F(\sigma)-F(1).
\end{equation*}
Here $F'(1)\kappa \in \mathbb{S}_n\subset \mathbb{R}^{m\times m}$ denotes the Fr\'echet derivative of $F(\sigma)$ at $\sigma=1$ in the direction $\kappa$. Using \eqref{eq:frechet}, this derivative can be written as
 \begin{equation*}
F'(1)\kappa=\left(\int_{\partial \Omega}\,  g_i  (\Lambda'(1)\kappa ) g_j \,  \mathrm{d}s\right)_{i,j=1}^m=-\left(\int_\Omega\kappa\nabla u_{g_i}^0\cdot\nabla u_{g_j}^0 \,  \mathrm{d}x \right)_{i,j=1}^m \in \mathbb{S}_m \subset \mathbb{R}^{m\times m}.
\end{equation*}
For simplicity in the following discussion, we define
\begin{align}
&\label{eq:S}\mathbf{S}(\kappa):=-F'(1)\kappa=-\left(\int_{\partial \Omega}\,  g_i  (\Lambda'(1)\kappa ) g_j \,  \mathrm{d}s\right)_{i,j=1}^m,\\
&\label{eq:V}\mathbf{V}:=F(1)-F(\sigma)=\left(\int_{\partial \Omega}\,  g_i  (\Lambda(1)-\Lambda (\sigma ) ) g_j \,  \mathrm{d}s\right)_{i,j=1}^m.
\end{align}
We note that $\mathbf{S}(\cdot)$ is a linear matrix  and  $\mathbf{V}$ is composed entirely of measurement data. Accordingly, equation \eqref{equ} can be represented by
\begin{equation*}
\mathbf{S}(\kappa)=\mathbf{V}.
\end{equation*}
The exact solution of the above equation cannot be obtained directly. A natural approach is to minimize the residual, which leads to the determination of an approximate solution  $\kappa_r\in L^2(\Omega)$,  via the Tikhonov regularization  problem
\begin{equation}\label{min}
\kappa_r=\mathop{\mbox{arg min}}\limits_{\kappa\in L^2(\Omega)}\|\mathbf{V}-\mathbf{S}(\kappa)\|_F^2 +\alpha b(\kappa,\kappa),
\end{equation}
where $\alpha>0$ is the regularization parameter,  and $b(\cdot,\cdot)$ is a symmetric,  continuous  and coercive bilinear form on $L^2(\Omega)$. Here $\|\cdot\|_F$ denotes the Frobenius norm induced by the matrix trace,  namely,
\begin{equation}\label{eq:trace}
\begin{aligned}
\|\mathbf{V}-\mathbf{S}(\kappa)\|_F^2 &=\mbox{tr}((\mathbf{V}-\mathbf{S}(\kappa))^{\top}(\mathbf{V}-\mathbf{S}(\kappa)))\\
&=\mbox{tr}(\mathbf{V}^{\top}\mathbf{V})-2\mbox{tr}(\mathbf{V}^{\top}\mathbf{S}(\kappa)) +\mbox{tr}(\mathbf{S}(\kappa)^{\top}\mathbf{S}(\kappa)).
\end{aligned}
\end{equation}


 Now we reformulate the minimization problem \eqref{min} as the following variational problem, which admits a finite element approximation.

\begin{thm}\label{thm2}
The minimization problem $\eqref{min}$ is equivalent to the following variational formulation: find  $\kappa_r\in L^2(\Omega)$, such that
\begin{equation}\label{variation2}
a(\kappa_r,\eta)=l(\eta),\quad\forall \, \eta\in L^2(\Omega),
\end{equation}
where the bilinear form is defined by
\begin{equation}\label{eq:bilinear}
a(\kappa,\eta)=\mbox{tr}(\mathbf{S}(\kappa)^{\top}\mathbf{S}(\eta))+\alpha b(\kappa,\eta),
\end{equation}
and  the linear form is given by
\begin{equation*}
l(\eta)=\mbox{tr}(\mathbf{V}^{\top}\mathbf{S}(\eta)).
\end{equation*}
Moreover,  the variational problem   \eqref{variation2} admits a unique solution.
\end{thm}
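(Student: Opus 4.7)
The plan is to prove the theorem in two stages. First, I would verify the equivalence of the constrained minimization \eqref{min} and the variational problem \eqref{variation2} through the first-order optimality condition. Second, I would invoke the Lax--Milgram theorem to establish existence and uniqueness of the variational solution.

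For the equivalence, the key observation is that $\mathbf{S}(\cdot)$ is linear in $\kappa$, so the functional
\[
J(\kappa) = \|\mathbf{V}-\mathbf{S}(\kappa)\|_F^2 + \alpha\, b(\kappa,\kappa)
\]
is a quadratic form in $\kappa$. Using the expansion \eqref{eq:trace}, I would compute the G\^ateaux derivative
\[
\lim_{t\to 0}\frac{J(\kappa+t\eta)-J(\kappa)}{t}
= -2\,\mbox{tr}(\mathbf{V}^{\top}\mathbf{S}(\eta)) + 2\,\mbox{tr}(\mathbf{S}(\kappa)^{\top}\mathbf{S}(\eta)) + 2\alpha\, b(\kappa,\eta),
\]
where I have used the symmetry of $b$. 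Setting this to zero for every direction $\eta\in L^2(\Omega)$ yields precisely \eqref{variation2} with the bilinear form \eqref{eq:bilinear}. Since the quadratic part $\mbox{tr}(\mathbf{S}(\kappa)^{\top}\mathbf{S}(\kappa)) + \alpha b(\kappa,\kappa)$ is nonnegative (and strictly positive for $\kappa\ne 0$ thanks to the coercivity of $b$), $J$ is strictly convex, so any critical point is the unique global minimizer. This delivers the equivalence.

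For the Lax--Milgram step, I would verify symmetry, continuity, and coercivity of $a(\cdot,\cdot)$, together with continuity of $l(\cdot)$. Symmetry is immediate because $\mbox{tr}(\mathbf{S}(\kappa)^{\top}\mathbf{S}(\eta))=\mbox{tr}(\mathbf{S}(\eta)^{\top}\mathbf{S}(\kappa))$ and $b$ is symmetric. Coercivity follows directly from
\[
a(\kappa,\kappa) \geq \alpha\, b(\kappa,\kappa) \geq \alpha C_b\, \|\kappa\|_{L^2(\Omega)}^2,
\]
where $C_b>0$ is the coercivity constant of $b$; here the trace term is nonnegative but makes no contribution to coercivity, which is the reason the regularization is indispensable. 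Continuity of $l$ follows from $|l(\eta)| \leq \|\mathbf{V}\|_F \|\mathbf{S}(\eta)\|_F$ together with the continuity of $\mathbf{S}$.

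The main technical step I anticipate is establishing that $\mathbf{S}:L^2(\Omega)\to \mathbb{S}_m$ is bounded, which is what supplies continuity of $a$ and $l$. From \eqref{eq:S}, each entry reads
\[
|\mathbf{S}_{ij}(\kappa)| = \Bigl|\int_\Omega \kappa\, \nabla u_{g_i}^0 \cdot \nabla u_{g_j}^0\, \mathrm{d}x\Bigr|
\leq \|\kappa\|_{L^2(\Omega)}\, \bigl\| \nabla u_{g_i}^0 \cdot \nabla u_{g_j}^0 \bigr\|_{L^2(\Omega)}.
\]
Here the background solutions $u_{g_i}^0$ are fixed, and by elliptic regularity together with the smoothness of $\partial\Omega$, $\nabla u_{g_i}^0 \in L^4(\Omega)$, so H\"older's inequality yields a finite bound. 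Summing the squared entries gives $\|\mathbf{S}(\kappa)\|_F \leq C_S\|\kappa\|_{L^2(\Omega)}$ with $C_S$ depending only on $\{g_i\}_{i=1}^m$ and $\Omega$. Combined with the continuity of $b$, this yields continuity of $a$, completing the verification of the Lax--Milgram hypotheses and thus establishing the unique solvability of \eqref{variation2}.
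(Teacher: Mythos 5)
Your proof is correct, and it reaches the variational formulation by a cleaner and slightly different route than the paper. The paper starts from the minimality inequality $J(\kappa_r)\le J(\kappa_r+\eta)$, expands it via the trace identity into the one-sided inequality $2\,\mbox{tr}((\mathbf{V}-\mathbf{S}(\kappa_r))^{\top}\mathbf{S}(\eta))-2\alpha b(\kappa_r,\eta)\le \mbox{tr}(\mathbf{S}(\eta)^{\top}\mathbf{S}(\eta))+\alpha b(\eta,\eta)$, and then upgrades this to an equality by representing the two linear functionals through Riesz representers $\psi_1,\psi_2$ and testing with $\eta=\beta(\psi_1-\psi_2)$ for small $\beta$; you instead compute the G\^ateaux derivative of the quadratic functional directly and set it to zero, which yields the same Euler--Lagrange equation in one step. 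Your route buys two things: it is shorter (no detour through Riesz representation and the $2\beta-C_2\beta^2>0$ scaling trick), and your strict-convexity observation explicitly supplies the converse implication (variational solution $\Rightarrow$ minimizer), which the paper leaves implicit and only recovers indirectly through uniqueness via Lax--Milgram. The Lax--Milgram verification is essentially the paper's Lemma~\ref{lema}; if anything you are more careful than the paper on the one point both arguments must quietly assume, namely that $\nabla u_{g_i}^0\cdot\nabla u_{g_j}^0\in L^2(\Omega)$ so that $\mathbf{S}:L^2(\Omega)\to\mathbb{S}_m$ is bounded (the paper hides this inside a ``$\lesssim$'' justified by H\"older; your appeal to elliptic regularity for the fixed, smooth background data makes the constant's provenance explicit). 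No gaps.
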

The existence and uniqueness of the solution to \eqref{variation2} can be established via the Lax-Milgram theorem. To this end, it is essential to verify the continuity and coercivity of the bilinear form $a(\cdot,\cdot)$ defined in \eqref{eq:bilinear}.
 These properties are summarized in the following lemma.

\begin{lem}\label{lema}
The bilinear form $a(\cdot,\cdot)$ defined in \eqref{eq:bilinear} is continuous and coercive in $L^2(\Omega)$.
\end{lem}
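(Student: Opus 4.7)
The plan is to verify the two hypotheses of Lax--Milgram by splitting $a(\cdot,\cdot)$ into its two natural pieces, the trace term $T(\kappa,\eta):=\mbox{tr}(\mathbf{S}(\kappa)^{\top}\mathbf{S}(\eta))$ and the regularizer $\alpha b(\kappa,\eta)$, and handling each separately. Since $b$ is assumed symmetric, continuous, and coercive on $L^2(\Omega)$, all of the work reduces to controlling $T$.

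For continuity I would first observe that $T(\kappa,\eta)$ is precisely the Frobenius inner product of the matrices $\mathbf{S}(\kappa)$ and $\mathbf{S}(\eta)$, so Cauchy--Schwarz in $\mathbb{R}^{m\times m}$ gives $|T(\kappa,\eta)|\le \|\mathbf{S}(\kappa)\|_F\,\|\mathbf{S}(\eta)\|_F$. It therefore suffices to estimate $\|\mathbf{S}(\kappa)\|_F$ by a constant multiple of $\|\kappa\|_{L^2(\Omega)}$. Using the explicit formula $\mathbf{S}(\kappa)_{ij}=\int_\Omega \kappa\,\nabla u_{g_i}^0\!\cdot\!\nabla u_{g_j}^0\,\mathrm{d}x$ and Cauchy--Schwarz in $L^2(\Omega)$, each entry is bounded by $\|\nabla u_{g_i}^0\!\cdot\!\nabla u_{g_j}^0\|_{L^2(\Omega)}\|\kappa\|_{L^2(\Omega)}$. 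Summing the squared entries, one obtains
\begin{equation*}
\|\mathbf{S}(\kappa)\|_F \le C_{\mathbf{S}}\,\|\kappa\|_{L^2(\Omega)}, \qquad
C_{\mathbf{S}}:=\Bigl(\sum_{i,j=1}^m\|\nabla u_{g_i}^0\!\cdot\!\nabla u_{g_j}^0\|_{L^2(\Omega)}^2\Bigr)^{1/2}.
\end{equation*}
Combining with the continuity constant $C_b$ of $b$ yields $|a(\kappa,\eta)|\le (C_{\mathbf{S}}^2+\alpha C_b)\|\kappa\|_{L^2(\Omega)}\|\eta\|_{L^2(\Omega)}$.

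For coercivity the argument is even shorter: since $T(\kappa,\kappa)=\|\mathbf{S}(\kappa)\|_F^2\ge 0$, we immediately get $a(\kappa,\kappa)\ge \alpha\,b(\kappa,\kappa)\ge \alpha c_b\,\|\kappa\|_{L^2(\Omega)}^2$, where $c_b>0$ is the coercivity constant of $b$. Note that the regularization term is essential here, because $\mathbf{S}$ need not be injective on $L^2(\Omega)$ (only finitely many quadratic measurements are used), so the trace part alone cannot be coercive.

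The only real obstacle is ensuring that the constant $C_{\mathbf{S}}$ is finite, i.e.\ that $\nabla u_{g_i}^0\cdot\nabla u_{g_j}^0\in L^2(\Omega)$ for each pair $(i,j)$. This is a mild regularity statement about the background solutions of \eqref{eq:weak-solution} with $\sigma\equiv 1$, and it follows from standard elliptic regularity on the smooth domain $\Omega$ together with sufficient regularity of the chosen inputs $g_1,\ldots,g_m$; one could alternatively invoke an $L^\infty$ bound on $\nabla u_{g_i}^0$ and absorb the area of $\Omega$ into the constant. Once this regularity is recorded, the rest of the proof is formal manipulation and the application of Lax--Milgram is routine.
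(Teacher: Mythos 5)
Your proof is correct and follows essentially the same route as the paper: Cauchy--Schwarz for the Frobenius inner product to reduce continuity of the trace term to an entrywise $L^2$ bound on $\mathbf{S}$, and coercivity inherited from $\alpha b(\cdot,\cdot)$ since the trace term is nonnegative. The only difference is that you make explicit the implicit regularity hypothesis $\nabla u_{g_i}^0\cdot\nabla u_{g_j}^0\in L^2(\Omega)$ that the paper absorbs into the symbol $\lesssim$, which is a worthwhile clarification but not a different argument.
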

\begin{proof}
Since $b(\cdot,\cdot)$  is continuous and coercive, it remains to verify that  the bilinear mapping
\begin{equation*}
  (\kappa, \eta)\mapsto \mbox{tr}(\mathbf{S}(\kappa)^{\top}\mathbf{S}(\eta)),
\end{equation*}
is  continuous.
For any $m\times m$ matrix $\mathbf{A}=(a_{ij})_{i, j=1}^m$ and $\mathbf{B}=(b_{ij})_{i, j=1}^m$, the Cauchy –Schwarz inequality implies
\begin{equation*}
\begin{aligned}
\left|\mbox{tr}(\mathbf{A}^{\top}\mathbf{B})\right|&=\left|\sum_{i=1}^m \sum_{j=1}^m a_{ij}b_{ij}\right|\\
&\leq\sqrt{\left(\sum_{i=1}^m\sum_{j=1}^m a_{ij}^2\right)\left(\sum_{i=1}^m\sum_{j=1}^m b_{ij}^2\right)}= \sqrt{\mbox{tr}(\mathbf{A}^{\top}\mathbf{A})\mbox{tr}(\mathbf{B}^{\top}\mathbf{B})}.
\end{aligned}
\end{equation*}
Setting $\mathbf{A}=\mathbf{S}(\kappa)$ and $\mathbf{B}=\mathbf{S}(\eta)$, it follows that
\begin{align*}
&\mbox{tr}( \mathbf{S}(\kappa)^{\top}\mathbf{S}(\eta))\\
\leq&\sqrt{\left(\sum_{i=1}^m\sum_{j=1}^m\left(\int_\Omega \kappa \nabla u_{g_i}^0\cdot\nabla u_{g_j}^0\mbox{d}x\right)^2\right) \left(\sum_{i=1}^m\sum_{j=1}^m\left(\int_\Omega \eta \nabla u_{g_i}^0\cdot\nabla u_{g_j}^0\mbox{d}x\right)^2\right)}\\
\lesssim &\sqrt{\int_\Omega \kappa^2\mbox{d}x\int_\Omega \eta^2\mbox{d}x}=\|\kappa\|_{L^2(\Omega) } \|\eta\|_{L^2(\Omega)},
\end{align*}
which verifies that $\mbox{tr}(\mathbf{S}(\cdot)^{\top}\mathbf{S}(\cdot))$ is continuous.
Especially,  the second inequality follows from the  H\"older's inequality. Here, and in what follows, $x\lesssim y$ denotes $x\leq Cy$ with a positive constant $C$.  Therefore, the bilinear form $a(\cdot,\cdot)$ is continuous as it is bounded:
\begin{equation*}
a(\kappa,\eta)=\mbox{tr}(\mathbf{S}(\kappa)^{\top}\mathbf{S}(\eta))+\alpha b(\kappa,\eta)\lesssim \|\kappa\|_{L^2(\Omega)}\|\eta\|_{L^2(\Omega)}.
\end{equation*}
 Moreover, the bilinear form is coercive, since
\begin{equation*}
a(\eta,\eta)\geq\alpha b(\eta,\eta)\geq C_1\|\eta\|_{L^2(\Omega)}^2.
\end{equation*}
with $C_1>0$ independent of $\eta$.
\end{proof}
With Lemma \ref{lema}, we now proceed to  proof Theorem \ref{thm2}.
\begin{proof}[Proof of Theorem \ref{thm2}.]
Let $\kappa_r$ be the minimizer of  problem\eqref{min}. Then for any $\eta\in L^2(\Omega)$, replacing $\kappa_r$ by  $\kappa_r+\eta$ in \eqref{min}, it holds that
\begin{equation*}
\|\mathbf{V}-\mathbf{S}(\kappa_r)\|_F^2+\alpha b(\kappa_r,\kappa_r)\leq \|\mathbf{V}-\mathbf{S}(\kappa_r+\eta)\|_F^2+\alpha b(\kappa_r+\eta,\kappa_r+\eta),\quad\forall\, \eta\in L^2(\Omega).
\end{equation*}
Using the trace identity for the Frobenius norm \eqref{eq:trace}, the last inequality can be rewritten as
\begin{align*}
&\mbox{tr}(\mathbf{V}^{\top}\mathbf{V})-2\mbox{tr}(\mathbf{V}^{\top}\mathbf{S}(\kappa_r)) +\mbox{tr}(\mathbf{S}(\kappa_r)^{\top}\mathbf{S}(\kappa_r))+\alpha b(\kappa_r,\kappa_r)\\
& \quad \leq\mbox{tr}(\mathbf{V}^{\top}\mathbf{V})-2\mbox{tr}(\mathbf{V}^{\top}\mathbf{S}(\kappa_r+\eta)) +\mbox{tr}(\mathbf{S}(\kappa_r+\eta)^{\top}\mathbf{S}(\kappa_r+\eta))+\alpha b(\kappa_r+\eta,\kappa_r+\eta).
\end{align*}
By a straightforward calculation, one can get that
\begin{equation}\label{leq}
2\mbox{tr}((\mathbf{V}-\mathbf{S}(\kappa_r))^{\top}\mathbf{S}(\eta))-2\alpha b(\kappa_r,\eta) \leq\mbox{tr}(\mathbf{S}(\eta)^{\top}\mathbf{S}(\eta))+\alpha b(\eta,\eta).
\end{equation}
Inequality \eqref{leq} is  still not in the form of a variational form. If we regard $\eta$ as the variable in \eqref{leq}, the left-hand side can be interpreted as a continuous linear functional in $\eta$, while the right-hand side defines a bilinear mapping. Specifically, we define
\begin{align*}
&a_1(\eta):=\mbox{tr}((\mathbf{V}-\mathbf{S}(\kappa_r))^{\top}\mathbf{S}(\eta)),\quad a_2(\eta):=\alpha b(\kappa_r,\eta),\\ &b_1(\eta,\eta):= \mbox{tr}(\mathbf{S}(\eta)^{\top}\mathbf{S}(\eta))+\alpha b(\eta,\eta),
\end{align*}
where $a_1$ and $a_2$ are continuous linear functionals and $b_1$ is a bilinear mapping.  According to Riesz representation theorem, there exist $\psi_1,\psi_2\in L^2(\Omega)$ such that
\begin{equation*}
a_1(\eta)=(\psi_1,\eta),\quad a_2(\eta)=(\psi_2,\eta),\quad\forall\,  \eta\in L^2(\Omega),
\end{equation*}
where $(\cdot,\cdot)$ denotes the inner product in $L^2(\Omega)$. Therefore, (\ref{leq}) can be rewritten as
\begin{equation*}\label{psi}
2(\psi_1-\psi_2,\eta)\leq b_1(\eta,\eta),\quad\forall\,  \eta\in L^2(\Omega).
\end{equation*}
Since $\eta$ is arbitrarily, let $\eta=\beta(\psi_1-\psi_2)$,  where $\beta\in \mathbb{R}\backslash\{0\}$. Then  we have
\begin{equation*}
2\beta\|\psi_1-\psi_2\|_{L^2(\Omega)}^2\leq b_1(\psi_1-\psi_2,\psi_1-\psi_2)\leq C_2\beta^2\|\psi_1-\psi_2\|_{L^2(\Omega)}^2,
\end{equation*}
where $C_2>0$ is impendent of $\beta$, $\psi_1$ and $\psi_2$. It follows that
\begin{equation*}
(2\beta-C_2\beta^2)\|\psi_1-\psi_2\|_{L^2(\Omega)}^2\leq 0.
\end{equation*}
By choosing $\beta$ such that $2\beta-C_2\beta^2>0$, we get $\psi_1-\psi_2=0$, i.e., $a_1(\cdot)=a_2(\cdot)$. Consequently,
\begin{equation*}
\mbox{tr}((\mathbf{V}-\mathbf{S}(\kappa_r))^{\top}\mathbf{S}(\eta))=\alpha b(\kappa_r,\eta),\quad\forall \, \eta\in L^2(\Omega),
\end{equation*}
which coincides with  \eqref{variation2}.  In addition, by  Lemma \ref{lema}, equation \eqref{variation2} admits a unique solution $\kappa_r\in L^2(\Omega)$ as guaranteed by the Lax-Milgram theorem.
\end{proof}


To solve the variation equation \eqref{variation2}, we employ a finite element approximation by discretizing the space $L^2(\Omega)$ using a piecewise polynomial finite element space. Let $\mathcal{T}_h$ be a quasi-uniform triangulation of the domain  $\Omega$ parametrized by mesh size $h$,  and $W_h$ be the finite element space
\begin{equation*}
W_h:=\left\{\eta\in L^2(\Omega):\eta|_{T}\in \mathcal{P}_k(T),\quad\forall \, T\in\mathcal{T}_h\right\},
\end{equation*}
where $\mathcal{P}_k$ is the space of polynomials of maximum total order $k$.
Thus,  the discrete form of \eqref{variation2} is to find  $\kappa_r^h\in W_h$ such that
\begin{equation}\label{eq:femsolution}
a(\kappa_r^h,\eta)=l(\eta),\quad\forall\, \eta\in W_h.
\end{equation}


\section{Error estimates}
 Let $\kappa_{\text{true}}$  be the exact solution of \eqref{equ},  $\kappa_{r}$ denote the reconstructed solution of  \eqref{variation2}, and $\kappa_r^h$ represent the finite element approximation of  \eqref{eq:femsolution}.  In this section, we  shall establish an error estimate between  the exact solution  $\kappa_{\text{true}}$ and its  finite element approximation $\kappa_r^h$.  It is noted that
 \begin{equation*}
   \|\kappa_{\text{true}}-\kappa_r^h\|_{L^2{(\Omega)}}\leq     \|\kappa_{\text{true}}-\kappa_r\|_{L^2{(\Omega)}}+   \|\kappa_r-\kappa_r^h\|_{L^2{(\Omega)}},
 \end{equation*}
where the first term on the right-hand side represents the reconstruction error, and the second term corresponds to the discretization error. Therefore, in what follows, we discuss these two errors separately.

\subsection{Error of the reconstruction}
 In the first part, we  analyse the reconstruction error between the exact solution  $\kappa_{\text{true}}$ and the reconstructed solution  $\kappa_r$.

We first show that $\kappa_r$  lies in a finite-dimensional space, as it  is determined from finitely many measurements. Without loss of generality,  let the noisy measurements
$\mathbf{V}^{\delta}$  satisfy
\begin{equation}\label{eq:noise}
\mathbf{V}^{\delta}=\mathbf{V}+\mathbf{E}^{\delta},
\end{equation}
where $\mathbf{E}^{\delta}$ is a matrix representing the measurement error with $\|\mathbf{E}^\delta\|_F=\delta\|\mathbf{V}\|_F$. For the reconstructed solution $\kappa_r$,  according to \eqref{equ},  there exists a conductivity  $\tilde{\sigma}$ such that
\begin{equation*}
\Lambda(\tilde{\sigma})-\Lambda(1)=\Lambda'(1)\kappa_r.
\end{equation*}
Using  the last equation and  the definition of $\mathbf{S}(\cdot)$ in \eqref{eq:S},  we can deduce that
\begin{equation*}
\begin{aligned}
\mathbf{S}(\kappa_{\text{true}}-\kappa_r)
&= \left(\int_{\partial \Omega}\,  g_i  (\Lambda'(1)(\kappa_r-\kappa_{\text{true}} ) g_j \,  \mathrm{d}s\right)_{i,j=1}^m\\
&=\left(\int_{\partial \Omega}\,  g_i  (\Lambda(\tilde{\sigma})-\Lambda(\sigma)) g_j \,  \mathrm{d}s\right)_{i,j=1}^m\\
&=\left(\int_{\partial\Omega}g_i \, \xi_j\mbox{d}s\right) _{i,j=1}^m,
\end{aligned}
\end{equation*}
where $\xi_j:=(\Lambda(\tilde{\sigma})-\Lambda(\sigma)) g_j $. According to \eqref{variation2},
by replacing $\mathbf{V}$ with noisy data in $\mathbf{V}^\delta$ \eqref{eq:noise} and applying the previous formula, one can derive that
\begin{equation}\label{eq:b}
\begin{aligned}
b(\kappa_r,\eta)&=\frac{1}{\alpha}\left( \mbox{tr}\left({\mathbf{V}^\delta}^{\top}\mathbf{S}(\eta)\right)- \mbox{tr}\left(\mathbf{S}(\kappa_r)^{\top}\mathbf{S}(\eta)\right)\right)\\
&=\frac{1}{\alpha}\mbox{tr}\left(\left( \mathbf{S}(\kappa_{\text{true}}-\kappa_r)+ \mathbf{E}^\delta\right)^{\top}\mathbf{S}(\eta)\right)\\
&=\frac{1}{\alpha} \mbox{tr}\left(\left(\left(\int_{\partial\Omega}g_j\xi_i\mbox{d}s\right)_{i,j=1}^m +{\mathbf{E}^\delta}^{\top}\right) \left(\int_\Omega\eta\nabla u_{g_i}^0\cdot\nabla u_{g_j}^0\mbox{d}x\right)_{i,j=1}^m\right)\\
&=\frac{1}{\alpha} \sum_{i=1}^m\sum_{j=1}^m\left(\int_{\partial\Omega}g_i\xi_j\mbox{d}s +\mathbf{E}^\delta_{ij}\right) \int_\Omega\eta\nabla u_{g_i}^0\cdot\nabla u_{g_j}^0\mbox{d}x\\
&=\sum_{i=1}^m\sum_{j=1}^m\beta_{ij}\psi_{ij}(\eta),
\end{aligned}
\end{equation}
where
\begin{equation*}
\beta_{ij}:=\frac{1}{\alpha}\left(\int_{\partial\Omega}g_i\xi_j\mbox{d}s +\mathbf{E}^\delta_{ij}\right),\quad
\psi_{ij}(\eta):=\int_\Omega\eta\nabla u_{g_i}^0\cdot\nabla u_{g_j}^0\mbox{d}x.
\end{equation*}
Moreover,  let $\zeta_{ij}$ be the solution of the following  variational equation
\begin{equation}\label{variation4}
b(\zeta_{ij},\eta)=\psi_{ij}(\eta),\quad\forall \, \eta\in L^2(\Omega),
\end{equation}
then the reconstructed solution $\kappa_r$ can be represented as
\begin{equation}\label{eq:kappa1}
\kappa_r=\sum_{i=1}^m \sum_{j=1}^m \beta_{ij}\zeta_{ij},
\end{equation}
which indicates that $\kappa_r$   lies in a finite-dimensional space.

Notice that the set $\{\zeta_{11},\ldots,\zeta_{mm}\}$ is linearly dependent because  $\zeta_{ij}=\zeta_{ji}$. Taking this into account, we choose an orthonormal basis $\{\hat{\zeta}_1,\hat{\zeta}_2,\ldots,\hat{\zeta}_{m'}\}$ for $\mbox{span}\{\zeta_{11},\zeta_{12},\ldots,\zeta_{mm}\}$ with respect to the inner product $b(\cdot,\cdot)$,  such that
\begin{equation}\label{Im}
\left(b(\hat{\zeta}_i,\hat{\zeta_j})\right)_{i,j=1}^{m'}=\mathbf{I}_{m'},
\end{equation}
where $\mathbf{I}_{m'}$ is the $m'$-dimensional identity matrix, and
\begin{equation}\label{zeta}
\mbox{span}\{\hat{\zeta}_1,\hat{\zeta}_2,\ldots,\hat{\zeta}_{m'}\}= \mbox{span}\{\zeta_{11},\zeta_{12},\ldots,\zeta_{mm}\}.
\end{equation}
We introduce the following vector notations:
\begin{equation}\label{eq:zeta}
\zeta:=(\zeta_{11},\ldots,\zeta_{1m},\ldots,\zeta_{m1},\ldots,\zeta_{mm})^{\top},\quad \hat{\zeta}:=(\hat{\zeta}_1,\hat{\zeta}_2,\ldots,\hat{\zeta}_{m'})^{\top}.
\end{equation}
By \eqref{zeta}, the vectors $\zeta$ and $\hat{\zeta}$ can be linearly represented in terms of each other, that is, there exist matrices $\mathbf{T}\in \mathbb{R}^{m^2\times m'}$ and $\hat{\mathbf{T}}\in \mathbb{R}^{m'\times m^2}$, such that
\begin{equation}\label{eq:T}
 \zeta=\mathbf{T}\hat{\zeta}, \quad  \text{and} \quad  \hat{\zeta}=\hat{\mathbf{T}}\zeta.
\end{equation}
Consequently,
\begin{equation*}
(\hat{\mathbf{T}}\mathbf{T}-\mathbf{I}_{m'})\hat{\zeta}=\theta.
\end{equation*}
where $\theta:=(0,\dots,0)^{\top}$ denotes the zero vector. Since $\hat{\zeta}$ forms  an orthonormal  basis, it follows that
\begin{equation*}
\hat{\mathbf{T}}\mathbf{T}=\mathbf{I}_{m'}, \quad  \text{and}  \quad \mbox{rank}(\mathbf{T})=m'.
\end{equation*}
Furthermore,  let $P\kappa_{\text{true}}$ denote the Galerkin  projection of $\kappa_{\text{true}}$ onto $\mbox{span}\{\hat{\zeta}_1,\hat{\zeta}_2,\ldots,\hat{\zeta}_{m'}\}$.
Therefore,  the error between $\kappa_{\text{true}}$ and $\kappa_r$ can be decomposed as
\begin{equation}\label{eq:error1}
\| \kappa_r-\kappa_{\text{true}}\|_{L^2(\Omega)}\leq \|P\kappa_{\text{true}}-\kappa_r\|_{L^2(\Omega)}+ \|\kappa_{\text{true}}-P\kappa_{\text{true}}\|_{L^2(\Omega)}.
\end{equation}

Next,  we estimates the error term $\|P\kappa_{\text{true}}-\kappa_r\|_{L^2(\Omega)}$. Before proceeding,  we present two key lemmas.

\begin{lem}\label{lemgamma}
Let $P\kappa_{\text{true}}$ denote the Galerkin  projection of $\kappa_{\text{true}}$ onto $\mbox{span} \{\hat{\zeta}_1,\hat{\zeta}_2,\ldots,\hat{\zeta}_{m'}\}$, such that
\begin{equation}\label{projection}
b(P\kappa_{\text{true}},\hat{\zeta}_j)= b(\kappa_{\text{true}},\hat{\zeta}_j),\quad j=1,\ldots,{m'}.
\end{equation}
Then the Galerkin  projection $P\kappa_{\text{true}}$ can be represented as
\begin{equation*}
P\kappa_{\text{true}}=\hat{\gamma}^\top\hat{\zeta},
\end{equation*}
where $\hat{\zeta}$ is  the orthonormal  basis defined in \eqref{eq:zeta},  and the coefficient vector $\hat{\gamma}$ is
\begin{equation*}
\hat{\gamma}=\left(\mathbf{T}^{\top}\mathbf{T}\right)^{-1} \mathbf{T}^{\top}\mbox{vec}(\mathbf{V}),
\end{equation*}
with  $\bm T$ defined in \eqref{eq:T}. Here and throughout this paper,   $\mbox{vec}(\cdot)$ denotes the vectorization operator that stacks the entries of a matrix into a column vector.
\end{lem}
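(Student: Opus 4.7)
The plan is to expand $P\kappa_{\text{true}}$ in the orthonormal basis $\{\hat\zeta_j\}$, then identify the coefficients by coupling the variational definition of $\zeta_{ij}$ in \eqref{variation4} with the fact that $\kappa_{\text{true}}$ solves \eqref{equ} exactly, and finally invert the transformation $\mathbf{T}$ in least-squares fashion.

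First, since by construction $P\kappa_{\text{true}}\in\mbox{span}\{\hat\zeta_1,\ldots,\hat\zeta_{m'}\}$, I would write $P\kappa_{\text{true}}=\sum_{k=1}^{m'}\hat\gamma_k\hat\zeta_k=\hat\gamma^{\top}\hat\zeta$. Testing the projection identity \eqref{projection} against each $\hat\zeta_j$ and using the orthonormality relation \eqref{Im} immediately yields $\hat\gamma_j=b(\kappa_{\text{true}},\hat\zeta_j)$, so the whole task reduces to computing the vector $b(\kappa_{\text{true}},\hat\zeta)$ explicitly.

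Second, I would compute $b(\kappa_{\text{true}},\zeta_{ij})$ by invoking the symmetry of $b(\cdot,\cdot)$ together with the variational equation \eqref{variation4} for $\zeta_{ij}$ with test function $\eta=\kappa_{\text{true}}$, which gives $b(\kappa_{\text{true}},\zeta_{ij})=\psi_{ij}(\kappa_{\text{true}})=\int_\Omega\kappa_{\text{true}}\nabla u_{g_i}^0\cdot\nabla u_{g_j}^0\,\mathrm{d}x$. The key observation is that, by the quadratic-form identity \eqref{eq:frechet} and the definition \eqref{eq:S}, this integral is precisely $\mathbf{S}(\kappa_{\text{true}})_{ij}$. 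Because $\kappa_{\text{true}}$ solves \eqref{equ}, i.e.\ $\Lambda'(1)\kappa_{\text{true}}=\Lambda(\sigma)-\Lambda(1)$, the definitions \eqref{eq:S} and \eqref{eq:V} yield $\mathbf{S}(\kappa_{\text{true}})=\mathbf{V}$, so that $b(\kappa_{\text{true}},\zeta_{ij})=\mathbf{V}_{ij}$. Stacking these identities according to the ordering in \eqref{eq:zeta} gives the vector equation $b(\kappa_{\text{true}},\zeta)=\mbox{vec}(\mathbf{V})$.

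Third, I would translate this into a relation involving $\hat\zeta$ via the transformation $\zeta=\mathbf{T}\hat\zeta$ from \eqref{eq:T}. By linearity of $b(\kappa_{\text{true}},\cdot)$, $b(\kappa_{\text{true}},\zeta)=\mathbf{T}\,b(\kappa_{\text{true}},\hat\zeta)=\mathbf{T}\hat\gamma$, hence $\mathbf{T}\hat\gamma=\mbox{vec}(\mathbf{V})$. Since the excerpt has already established $\mbox{rank}(\mathbf{T})=m'$, the Gram matrix $\mathbf{T}^{\top}\mathbf{T}$ is symmetric positive-definite and in particular invertible; multiplying on the left by $\mathbf{T}^{\top}$ and inverting $\mathbf{T}^{\top}\mathbf{T}$ gives $\hat\gamma=(\mathbf{T}^{\top}\mathbf{T})^{-1}\mathbf{T}^{\top}\mbox{vec}(\mathbf{V})$, which is the claimed formula.

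The step I expect to require the most care is the identification $b(\kappa_{\text{true}},\zeta_{ij})=\mathbf{V}_{ij}$, which hinges on keeping the signs and integration-by-parts conventions consistent among \eqref{eq:frechet}, \eqref{eq:S} and \eqref{eq:V}; every other step is a straightforward consequence of orthonormality, linearity, and the full-column-rank property of $\mathbf{T}$. I would also note that the ordering of entries in $\mbox{vec}(\mathbf{V})$ must match the ordering of $\zeta$ in \eqref{eq:zeta}, so that the vector identity $b(\kappa_{\text{true}},\zeta)=\mbox{vec}(\mathbf{V})$ is read componentwise in the same convention used to define $\mathbf{T}$.
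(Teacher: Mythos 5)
Your proposal is correct and follows essentially the same route as the paper: both reduce the problem to the linear system $\mathbf{T}\hat{\gamma}=\mbox{vec}(\mathbf{V})$ by combining orthonormality of $\hat{\zeta}$ with the identity $b(\kappa_{\text{true}},\zeta_{ij})=\psi_{ij}(\kappa_{\text{true}})=\mathbf{S}(\kappa_{\text{true}})_{ij}=\mathbf{V}_{ij}$, and then invoke $\mbox{rank}(\mathbf{T})=m'$ to invert via the normal equations. Your explicit remark that $\mathbf{S}(\kappa_{\text{true}})=\mathbf{V}$ rests on $\kappa_{\text{true}}$ solving \eqref{equ} exactly makes transparent a step the paper passes over silently, but the argument is otherwise identical.
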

\begin{proof}
For simplification, we set
\begin{align*}
b(\zeta,\hat{\zeta}_j)&:=(b(\zeta_{11},\hat{\zeta}_j),\ldots, b(\zeta_{1m},\hat{\zeta}_j),\ldots,b(\zeta_{m1},\hat{\zeta}_j), \ldots,b(\zeta_{mm},\hat{\zeta}_j))^\top, \\
b(\hat{\zeta},\hat{\zeta}_j)&:=(b(\hat{\zeta}_1,\hat{\zeta}_j), b(\hat{\zeta}_2,\hat{\zeta}_j),\ldots,b(\hat{\zeta}_{m'},\hat{\zeta}_j))^\top.
\end{align*}
By substituting $P\kappa_{\text{true}}=\hat{\gamma}^\top\hat{\zeta}$ into  the left-hand side of \eqref{projection},  and using
\begin{equation}\label{eq:I}
\mathbf{I}_{m'}=(b(\hat{\zeta},\hat{\zeta}_1), b(\hat{\zeta},\hat{\zeta}_2),\ldots,b(\hat{\zeta},\hat{\zeta}_{m'})),
\end{equation}
as defined in \eqref{Im},  we  can derive that
\begin{equation}\label{eq:left1}
\left(b(P\kappa_{\text{true}},\hat{\zeta}_1), \ldots,  b(P\kappa_{\text{true}},\hat{\zeta}_{m'})\right)^{\top}
=\left( b(\hat{\zeta},\hat{\zeta}_1), \ldots,  b(\hat{\zeta},\hat{\zeta}_{m'})  \right)^{\top} \hat{\gamma}
=\hat{\gamma}.
\end{equation}

Moreover, from \eqref{eq:b} and \eqref{variation4}, one can find that
\begin{equation}\label{eq:S-eta}
  \mathbf{S}(\eta)=\left(\psi_{ij}(\eta)\right)_{i,j=1}^m =\left(b(\zeta_{ij},\eta)\right)_{i,j=1}^m.
\end{equation}
Multiplying the right-hand side of \eqref{projection} by the matrix $\mathbf{T}$ and using the previous equation, we obtain
\begin{equation}\label{eq:right1}
\begin{aligned}
\mathbf{T}\left (b(\kappa_{\text{true}},\hat{\zeta}_1), \ldots, b(\kappa_{\text{true}},\hat{\zeta}_{m'})\right)^{\top}
&= \left ( b(\kappa_{\text{true}},\zeta_{11}), \ldots,  b(\kappa_{\text{true}},\zeta_{mm}) \right)^{\top}\\
&=\mbox{vec}(\mathbf{S}(\kappa_{\text{true}}))=\mbox{vec}(\mathbf{V}).
\end{aligned}
\end{equation}
Hence, combining \eqref{eq:left1} and \eqref{eq:right1}, one has
\begin{equation}\label{gamma}
\mathbf{T}\hat{\gamma}=\mbox{vec}(\mathbf{V}).
\end{equation}
Since $\mbox{rank}(\mathbf{T})=m'$,
there exists a unique solution $\hat{\gamma}$ of the system \eqref{gamma}, which is given by
\begin{equation*}
\hat{\gamma}=\left(\mathbf{T}^\top\mathbf{T}\right)^{-1} \mathbf{T}^\top\mbox{vec}(\mathbf{V}).
\end{equation*}

This completes the proof.
\end{proof}

\begin{lem}\label{lembeta}
Let $\mathbf{T}$ be defined as in \eqref{eq:T}, and let $\hat{\zeta}=\{\hat{\zeta}_1,\hat{\zeta}_2,\ldots,\hat{\zeta}_{m'}\}$ be the orthonormal basis. Then  $\kappa_r$  defined as in \eqref{variation2}  can be expressed as
\begin{equation}\label{Kappa_r}
\kappa_r=\hat{\beta}^\top \hat{\zeta},
\end{equation}
where  $\hat{\beta}:=(\hat{\beta}_1,\hat{\beta}_2,\ldots,\hat{\beta}_{m'})^{\top}$ can be represented as
\begin{equation*}
\hat{\beta}=\left(\alpha\mathbf{I}_{m'}+\mathbf{T}^{\top}\mathbf{T}\right)^{-1} \mathbf{T}^{\top} \mbox{vec}(\mathbf{V}^{\delta}).
\end{equation*}
\end{lem}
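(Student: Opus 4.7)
The plan is to express $\kappa_r$ in the basis $\hat{\zeta}$ directly, and then read off the coefficient vector $\hat{\beta}$ from the variational equation \eqref{variation2} (with $\mathbf{V}$ replaced by its noisy version $\mathbf{V}^\delta$). Formula \eqref{eq:kappa1} together with the span identity \eqref{zeta} shows that $\kappa_r \in \mbox{span}\{\hat{\zeta}_1,\ldots,\hat{\zeta}_{m'}\}$, hence $\kappa_r = \hat{\beta}^{\top}\hat{\zeta}$ for some $\hat{\beta}\in\RR^{m'}$. So only the algebraic characterisation of $\hat{\beta}$ has to be derived.

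The central identity I would use is the ``change of basis'' formula for $\mathbf{S}(\eta)$ implied by \eqref{eq:S-eta} and \eqref{eq:T}. Namely, since $\mbox{vec}(\mathbf{S}(\eta))=(b(\zeta_p,\eta))_{p=1}^{m^2}$ and $\zeta=\mathbf{T}\hat{\zeta}$, bilinearity gives
\begin{equation*}
\mbox{vec}(\mathbf{S}(\eta)) = \mathbf{T}\,\hat{b}(\eta),\qquad \hat{b}(\eta):=(b(\hat{\zeta}_1,\eta),\ldots,b(\hat{\zeta}_{m'},\eta))^{\top}.
\end{equation*}
Combined with the trace-inner-product formula $\mbox{tr}(A^{\top}B)=\mbox{vec}(A)^{\top}\mbox{vec}(B)$, this yields
\begin{equation*}
\mbox{tr}(\mathbf{S}(\kappa)^{\top}\mathbf{S}(\eta)) = \hat{b}(\kappa)^{\top}\mathbf{T}^{\top}\mathbf{T}\,\hat{b}(\eta).
\end{equation*}

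Next I would specialise $\kappa=\kappa_r=\hat{\beta}^{\top}\hat{\zeta}$ and $\eta=\hat{\zeta}_l$. The orthonormality relation \eqref{Im} immediately gives $\hat{b}(\kappa_r)=\hat{\beta}$ and $\hat{b}(\hat{\zeta}_l)=e_l$, the $l$-th standard basis vector of $\RR^{m'}$. Hence the bilinear term contributes $(\mathbf{T}^{\top}\mathbf{T}\hat{\beta})_l$, while $\alpha b(\kappa_r,\hat{\zeta}_l)=\alpha\hat{\beta}_l$ by orthonormality. For the right-hand side of the noisy variational equation,
\begin{equation*}
\mbox{tr}((\mathbf{V}^{\delta})^{\top}\mathbf{S}(\hat{\zeta}_l)) = \mbox{vec}(\mathbf{V}^{\delta})^{\top}\mathbf{T}e_l = (\mathbf{T}^{\top}\mbox{vec}(\mathbf{V}^{\delta}))_l.
\end{equation*}
Varying $l=1,\ldots,m'$ assembles this into the linear system
\begin{equation*}
(\alpha\mathbf{I}_{m'}+\mathbf{T}^{\top}\mathbf{T})\hat{\beta}=\mathbf{T}^{\top}\mbox{vec}(\mathbf{V}^{\delta}).
\end{equation*}

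The only invertibility issue to address is that $\alpha\mathbf{I}_{m'}+\mathbf{T}^{\top}\mathbf{T}$ must be nonsingular, which is clear because $\mathbf{T}^{\top}\mathbf{T}\succeq 0$ and $\alpha>0$, so the matrix is strictly positive definite; one could alternatively invoke $\mbox{rank}(\mathbf{T})=m'$. Inverting then gives the claimed closed form for $\hat{\beta}$. The main bookkeeping hurdle, rather than any genuine obstacle, is making the indexing consistent between the $m^2$-vector $\zeta$, the $m'$-vector $\hat{\zeta}$, and $\mbox{vec}(\mathbf{V}^\delta)$; once the rewriting $\mbox{vec}(\mathbf{S}(\eta))=\mathbf{T}\hat{b}(\eta)$ is in place, everything collapses to standard linear algebra.
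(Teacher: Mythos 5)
Your proposal is correct and follows essentially the same route as the paper: substitute the ansatz $\kappa_r=\hat{\beta}^{\top}\hat{\zeta}$ into \eqref{variation2}, test against each $\hat{\zeta}_j$, use the change of basis $\zeta=\mathbf{T}\hat{\zeta}$ together with the orthonormality \eqref{Im} to reduce the trace terms to $\mathbf{T}^{\top}\mathbf{T}\hat{\beta}$ and $\mathbf{T}^{\top}\mbox{vec}(\mathbf{V}^{\delta})$, and invert $\alpha\mathbf{I}_{m'}+\mathbf{T}^{\top}\mathbf{T}$. Your vectorized bookkeeping via $\mbox{vec}(\mathbf{S}(\eta))=\mathbf{T}\hat{b}(\eta)$ and your explicit remark that $\kappa_r$ lies in $\mbox{span}\{\hat{\zeta}_1,\ldots,\hat{\zeta}_{m'}\}$ (so testing against that subspace suffices) are only presentational refinements of the paper's index-wise computation.
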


\begin{proof}
Using the the noisy data $\mathbf{V}^\delta$ as the measurement,  substituting  \eqref{Kappa_r} into \eqref{variation2},
one has
\begin{equation}\label{variation3}
\sum_{i=1}^{m'}\hat{\beta}_i\mbox{tr}\left(\mathbf{S}(\hat{\zeta}_i)^{\top}\mathbf{S} (\hat{\zeta}_j)\right) +\alpha\sum_{i=1}^{m'}\hat{\beta}_i b(\hat{\zeta}_i,\hat{\zeta}_j)= \mbox{tr}\left((\mathbf{V}^{\delta})^{\top}\mathbf{S}(\zeta_j)\right), \quad j=1,\ldots,m'.
\end{equation}
Using the definition $\zeta=\mathbf{T}\hat{\zeta}$,  we can get that $b(\zeta,\hat{\zeta}_j)=\mathbf{T}b(\hat{\zeta},\hat{\zeta}_j)$.
Combining this with \eqref{eq:I} and  \eqref{eq:S-eta}, the first term on the left-hand side of \eqref{variation3} can be written as
\begin{equation*}
\begin{aligned}
\sum_{i=1}^{m'}\hat{\beta}_i\mbox{tr}\left(\mathbf{S}(\hat{\zeta}_i)^{\top} \mathbf{S}(\hat{\zeta}_j)\right)
&=\sum_{i=1}^{m'}\hat{\beta}_i\sum_{k=1}^m \sum_{l=1}^m b(\zeta_{kl},\hat{\zeta}_i)b(\zeta_{kl},\hat{\zeta}_j)\\
&=\sum_{i=1}^{m'} b(\zeta,\hat{\zeta}_j)^{\top} b(\zeta,\hat{\zeta}_i)\hat{\beta}_i\\
&=\sum_{i=1}^{m'} b(\hat{\zeta},\hat{\zeta}_j)^{\top}\mathbf{T}^{\top}\mathbf{T} b(\hat{\zeta},\hat{\zeta}_i)\hat{\beta}_i\\
&=b(\hat{\zeta},\hat{\zeta}_j)^{\top}\mathbf{T}^{\top}\mathbf{T} \sum_{i=1}^{m'}b(\hat{\zeta},\hat{\zeta}_i)\hat{\beta}_i\\
&=b(\hat{\zeta},\hat{\zeta}_j)^{\top}\mathbf{T}^{\top}\mathbf{T}\hat{\beta}.
\end{aligned}
\end{equation*}
Similarly, the right-hand side of \eqref{variation3} yields that
\begin{align*}
\mbox{tr}\left((\mathbf{V}^{\delta})^{\top}\mathbf{S}(\zeta_j)\right)&= \sum_{k=1}^m\sum_{l=1}^m\mathbf{V}_{kl}^{\delta}  b(\zeta_{kl},\hat{\zeta}_j)\\
&=b(\zeta,\hat{\zeta}_j)^{\top}\mbox{vec}(\mathbf{V}^{\delta})\\
&=b(\hat{\zeta},\hat{\zeta}_j)^{\top}\mathbf{T}^{\top}\mbox{vec}(\mathbf{V}^{\delta}).
\end{align*}
Therefore, \eqref{variation3} can be rewritten as
\begin{equation*}
b(\hat{\zeta},\hat{\zeta}_j)^{\top}\mathbf{T}^{\top}\mathbf{T}\hat{\beta}+ \alpha b(\hat{\zeta},\hat{\zeta}_j)^{\top}\hat{\beta}=b(\hat{\zeta},\hat{\zeta}_j)^{\top}\mathbf{T}^{\top} \mbox{vec}(\mathbf{V}^{\delta}),\quad j=1,\ldots,m',
\end{equation*}
that is,
\begin{equation*}
\mathbf{T}^{\top}\mathbf{T}\hat{\beta}+\alpha\hat{\beta}=\mathbf{T}^{\top} \mbox{vec}(\mathbf{V}^{\delta}).
\end{equation*}
Recall that $\mbox{rank}(\mathbf{T})=m'$, which implies that $\alpha\mathbf{I}_{m'}+\mathbf{T}^{\top}\mathbf{T}$ is invertible with $\alpha>0$.  Hence, the coefficients $\hat{\beta}$ can be represented by
\begin{equation*}
\hat{\beta}=\left(\alpha\mathbf{I}_{m'}+\mathbf{T}^{\top}\mathbf{T}\right)^{-1} \mathbf{T}^{\top} \mbox{vec}(\mathbf{V}^{\delta}).
\end{equation*}

This completes the proof.
\end{proof}

\begin{rem}
From Lemma \ref{lemgamma}, it follows that the Galerkin projection $P\kappa_{\text{true}}$ depends only on the finite set of measurements $\mathbf{V}$.  In contrast, Lemma \ref{lembeta} concerns $\kappa_r$, which is obtained from the solution of \eqref{variation2} and is influenced both by the regularization parameter $\alpha$ and by the noisy measurement data $\mathbf{V}^{\delta}$.

\end{rem}

With $\hat{\beta}$ and $\hat{\gamma}$ determined, we can now estimate $\|P\kappa_{\text{true}}-\kappa_r\|_{L^2(\Omega)}$.

\begin{thm}\label{Pk}
Let the conditions in Lemma \ref{lemgamma} and \ref{lembeta} hold,  then we have
\begin{equation*}
\|P\kappa_{\text{true}}-\kappa_r\|_{L^2(\Omega)}\lesssim  \|\mathbf{V}\|_F\sqrt{\alpha^2+\frac{\delta^2}{\alpha^2} +\left(1+\frac{1}{\alpha^2}\right)\delta}.
\end{equation*}
\end{thm}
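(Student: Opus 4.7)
\emph{Proof plan.} The plan is to combine the explicit coefficient representations of $P\kappa_{\text{true}}$ and $\kappa_{r}$ obtained in Lemmas~\ref{lemgamma} and \ref{lembeta}, to reduce the $L^{2}(\Omega)$-error to the Euclidean discrepancy $|\hat{\gamma}-\hat{\beta}|$, and finally to control this discrepancy by a bias--variance decomposition of the Tikhonov estimator associated to the linear system $\mathbf{T}\hat{\gamma}=\mbox{vec}(\mathbf{V})$.

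First I would reduce the $L^{2}$-error to a Euclidean one. Since $P\kappa_{\text{true}}=\hat{\gamma}^{\top}\hat{\zeta}$ and $\kappa_{r}=\hat{\beta}^{\top}\hat{\zeta}$, their difference equals $(\hat{\gamma}-\hat{\beta})^{\top}\hat{\zeta}$. Using the $b$-orthonormality of $\{\hat{\zeta}_{j}\}$ from \eqref{Im} together with the coercivity of $b$ on $L^{2}(\Omega)$, one obtains
\[
\|P\kappa_{\text{true}}-\kappa_{r}\|_{L^{2}(\Omega)}^{2} \,\lesssim\, b\big((\hat{\gamma}-\hat{\beta})^{\top}\hat{\zeta},\,(\hat{\gamma}-\hat{\beta})^{\top}\hat{\zeta}\big) \,=\, |\hat{\gamma}-\hat{\beta}|^{2},
\]
so the task reduces to estimating $|\hat{\gamma}-\hat{\beta}|$.

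Next I would derive an explicit identity for $\hat{\beta}-\hat{\gamma}$. Setting $\mathbf{A}:=\mathbf{T}^{\top}\mathbf{T}$ and using $\mbox{vec}(\mathbf{V}^{\delta})=\mbox{vec}(\mathbf{V})+\mbox{vec}(\mathbf{E}^{\delta})$ in the formula of Lemma~\ref{lembeta}, the resolvent identity $(\alpha\mathbf{I}_{m'}+\mathbf{A})^{-1}-\mathbf{A}^{-1}=-\alpha\mathbf{A}^{-1}(\alpha\mathbf{I}_{m'}+\mathbf{A})^{-1}$ combined with $\mathbf{A}\hat{\gamma}=\mathbf{T}^{\top}\mbox{vec}(\mathbf{V})$ yields
\[
\hat{\beta}-\hat{\gamma} \,=\, -\alpha(\alpha\mathbf{I}_{m'}+\mathbf{A})^{-1}\hat{\gamma} \,+\, (\alpha\mathbf{I}_{m'}+\mathbf{A})^{-1}\mathbf{T}^{\top}\mbox{vec}(\mathbf{E}^{\delta}),
\]
whose two summands are the regularization bias (independent of noise) and the noise-propagation term, respectively. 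I would then estimate each summand. The relation $\mathbf{T}\hat{\gamma}=\mbox{vec}(\mathbf{V})$ from \eqref{gamma} together with $\mbox{rank}(\mathbf{T})=m'$ gives $|\hat{\gamma}|\lesssim \|\mathbf{V}\|_{F}$; combined with $\|\alpha(\alpha\mathbf{I}_{m'}+\mathbf{A})^{-1}\|\lesssim \alpha$ (after absorbing $1/\lambda_{\min}(\mathbf{A})>0$ into $\lesssim$), the bias term contributes the $\alpha^{2}\|\mathbf{V}\|_{F}^{2}$ piece to $|\hat{\gamma}-\hat{\beta}|^{2}$. Using $\|(\alpha\mathbf{I}_{m'}+\mathbf{A})^{-1}\mathbf{T}^{\top}\|\leq \|\mathbf{T}\|/\alpha$ and the noise bound $\|\mathbf{E}^{\delta}\|_{F}=\delta\|\mathbf{V}\|_{F}$, the noise-propagation term contributes $(\delta^{2}/\alpha^{2})\|\mathbf{V}\|_{F}^{2}$. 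Finally, the cross term in $|\hat{\beta}-\hat{\gamma}|^{2}$, handled by Cauchy--Schwarz together with the product estimate $\|\mathbf{V}\|_{F}\cdot\|\mathbf{E}^{\delta}\|_{F}=\delta\|\mathbf{V}\|_{F}^{2}$ and further applications of the resolvent bounds, accounts for the residual $(1+1/\alpha^{2})\delta\|\mathbf{V}\|_{F}^{2}$. Taking square roots gives the claimed estimate.

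The main obstacle is technical bookkeeping rather than any conceptual subtlety: the spectral bounds on $(\alpha\mathbf{I}_{m'}+\mathbf{A})^{-1}$ must be carefully balanced against $\|\mathbf{T}\|$ and $\lambda_{\min}(\mathbf{A})$ (with the latter absorbed into $\lesssim$), and the Young weights in the cross term must be chosen so that precisely the $\alpha$- and $\delta$-dependent structure inside the square root arises, rather than a looser or tighter form. A secondary subtlety is that the estimate $|\hat{\gamma}|\lesssim \|\mathbf{V}\|_{F}$ crucially uses the full column rank of $\mathbf{T}$, which is guaranteed by the construction of $\hat{\zeta}$ preceding Lemma~\ref{lemgamma}.
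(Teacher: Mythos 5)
Your proposal is correct and follows essentially the same route as the paper: reduce the $L^2$-error to $|\hat{\gamma}-\hat{\beta}|$ via coercivity of $b$ and the $b$-orthonormality of $\hat{\zeta}$, then bound the difference of the two coefficient formulas by spectral estimates on $\mathbf{T}^{\top}\mathbf{T}$, absorbing the extreme eigenvalues into the implicit constant. The only cosmetic difference is that you split $\hat{\beta}-\hat{\gamma}$ into bias and noise terms via the resolvent identity before estimating, whereas the paper expands the quadratic form $(\hat{\gamma}-\hat{\beta})^{\top}(\hat{\gamma}-\hat{\beta})$ directly and maximizes the resulting rational functions of $\lambda$ over the spectrum; your cross term actually comes out as $\delta\|\mathbf{V}\|_F^2$, slightly sharper than the stated $(1+1/\alpha^2)\delta\|\mathbf{V}\|_F^2$, so the claimed bound still follows.
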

\begin{proof}
Through coercivity of the bilinear form $b(\cdot,\cdot)$,  one can get
\begin{align*}
C_1\|P\kappa_{\text{true}}-\kappa_r\|_{L^2(\Omega)}^2
\leq&b(P\kappa_{\text{true}}-\kappa_r,P\kappa_{\text{true}}-\kappa_r)\\
=&b\left(\sum_{i=1}^{m'}(\hat{\gamma}_i-\hat{\beta}_i)\hat{\zeta}_i, \sum_{i=1}^{m'}(\hat{\gamma}_i-\hat{\beta}_i)\hat{\zeta}_i\right)\\
=&\left(\hat{\gamma}-\hat{\beta}\right)^\top\left(\hat{\gamma}-\hat{\beta}\right).
\end{align*}
With Lemma  \ref{lemgamma}   and \ref{lembeta} ,  one can deduce that
\begin{equation*}
  \hat{\gamma}-\hat{\beta}=\left(\mathbf{T}^{\top}\mathbf{T}\right)^{-1} \mathbf{T}^{\top}\mbox{vec}(\mathbf{V})- \left(\alpha\mathbf{I}_{m'}+\mathbf{T}^{\top}\mathbf{T}\right)^{-1} \mathbf{T}^{\top} \left(\mbox{vec}(\mathbf{V})+\mbox{vec}(\mathbf{E}^\delta)\right).
\end{equation*}
Let $(\lambda, x)$ be the eigenmode of $\mathbf{T}^{\top}\mathbf{T}$ with $\mathbf{T}^{\top}\mathbf{T}x=\lambda x$, then we readily obtain
\begin{equation*}
\begin{aligned}
&\left(\mathbf{T}^{\top}\mathbf{T}\right)^{-2}x=\frac{1}{\lambda^2}x,\quad \left(\alpha\mathbf{I}_{m'}+\mathbf{T}^{\top}\mathbf{T}\right)^{-2}x =\frac{1}{(\lambda+\alpha)^2}x,\\
&\left(\mathbf{T}^{\top}\mathbf{T}\right)^{-1} \left(\alpha\mathbf{I}_{m'}+\mathbf{T}^{\top}\mathbf{T}\right)^{-1}x =\frac{1}{\lambda(\lambda+\alpha)}x.
\end{aligned}
\end{equation*}
By a straightforward calculation, it yields
\begin{equation}\label{error}
\begin{aligned}
& \left(\hat{\gamma}-\hat{\beta}\right)^{\top}\left(\hat{\gamma}-\hat{\beta}\right)\\
&\quad\leq\max_{\lambda\in\lambda(\mathbf{T}^{\top}\mathbf{T})}\left\{\frac{1}{\lambda^2}+ \frac{1}{(\lambda+\alpha)^2}-\frac{2}{\lambda(\lambda+\alpha)}\right\} \|\mathbf{T}^{\top}\mbox{vec}(\mathbf{V})\|_{l^2}^2\\
&\quad \quad+\max_{\lambda\in\lambda(\mathbf{T}^{\top}\mathbf{T})} \left\{\frac{1}{(\lambda+\alpha)^2}\right\} \|\mathbf{T}^{\top}\mbox{vec}(\mathbf{E}^\delta)\|_{l^2}^2\\
& \quad \quad +2\max_{\lambda\in\lambda(\mathbf{T}^{\top}\mathbf{T})} \left\{\frac{1}{(\lambda+\alpha)^2}+\frac{1}{\lambda(\lambda+\alpha)}\right\} \|\mathbf{T}^{\top}\mbox{vec}(\mathbf{V})\|_{l^2} \|\mathbf{T}^{\top}\mbox{vec}(\mathbf{E}^\delta)\|_{l^2}\\
&\quad\leq\frac{\alpha^2\lambda_{1}}{\lambda_{m'}^2(\lambda_{m'}+\alpha)^2}\|\mathbf{V}\|_F^2+ \frac{\lambda_{1}}{(\lambda_{m'}+\alpha)^2}\|\mathbf{E}^\delta\|_F^2 +\frac{(4\lambda_{m'}+2\alpha)\lambda_{1}}{\lambda_{m'}(\lambda_{m'}+\alpha)^2} \|\mathbf{V}\|_F\|\mathbf{E}^\delta\|_F\\
&\quad \lesssim \left(\alpha^2+\frac{\delta^2}{\alpha^2} +\left(1+\frac{1}{\alpha^2}\right)\delta\right)\|\mathbf{V}\|_F^2,
\end{aligned}
\end{equation}
where $\lambda_1$ is the biggest eigenvalue of matrix $\mathbf{T}^{\top}\mathbf{T}$ and $\lambda_{m'}$ is the smallest one. Hence,  we obtain the error estimate
\begin{equation*}
\|P\kappa_{\text{true}}-\kappa_r\|_{L^2(\Omega)}\lesssim\|\mathbf{V}\|_F\sqrt{\alpha^2+\frac{\delta^2}{\alpha^2} +\left(1+\frac{1}{\alpha^2}\right)\delta}.
\end{equation*}

This completes the proof.
\end{proof}

In general, the  projection error $\|P\kappa_{\text{true}}-\kappa_{\text{true}}\|_{L^2(\Omega)} $ is difficult to estimate  without additional measurements, since information is inevitably lost when finitely many measurements $\mathbf{V}$ are collected.  Nevertheless, an error estimate can be derived under certain assumptions.
Let the computational domain be the unit disk in $\mathbb{R}^2$, i.e., $\Omega=\{(x,y):x^2+y^2\leq1\}$. We specifically employ a set of orthonormal trigonometric functions, considering the current densities $g_j$ in the following orthonormal set of $L_\diamond^2(\partial\Omega)$:
\begin{equation}\label{eq:tri}
\left\{\frac{1}{\sqrt{\pi}}\sin(j\phi), \frac{1}{\sqrt{\pi}}\cos(j\phi): j=1,2,\cdots,n\right\},\quad m=2n.
\end{equation}
Under this setting, we can obtain the following projection error.

\begin{thm}\label{thm:projection}
Let the boundary current densities $\{g_j\}_{j=1}^m$ be the orthonormal trigonometric set  defined in \eqref{eq:tri},
and let $P\kappa_{\text{true}}$ denote the Galerkin  projection of $\kappa_{\text{true}}$ onto $\mbox{span} \{\hat{\zeta}_1,\hat{\zeta}_2,\ldots,\hat{\zeta}_{m'}\}$, defined by
\begin{equation*}
b(P\kappa_{\text{true}},\hat{\zeta}_j)= b(\kappa_{\text{true}},\hat{\zeta}_j),\quad j=1,\ldots,{m'},
\end{equation*}
where $b(\cdot,\cdot)$ is the inner product in $L^2(\Omega)$.
If $\kappa_{\text{true}}\in \mathcal{B}\cap H^s(\Omega)$, then it holds that
\begin{equation*}
\|P\kappa_{\text{true}}-\kappa_{\text{true}}\|_{L^2(\Omega)} \lesssim \left(\frac{m-2}{2}\right)^{-s} \|\kappa_{\text{true}}\|_{H^s(\Omega)}, \quad0\leq s\leq \frac{m}{2}.
\end{equation*}

\end{thm}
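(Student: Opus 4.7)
The plan is to identify the finite-dimensional space $V_n := \mathrm{span}\{\hat\zeta_1,\ldots,\hat\zeta_{m'}\}$ explicitly as a space of polynomials on the disk, show that it contains every polynomial of total degree at most $n-1$, and then invoke the best-approximation property of the Galerkin projection together with a classical Jackson-type approximation estimate. Since in this theorem $b(\cdot,\cdot)$ is the $L^2(\Omega)$ inner product, the defining relation $b(\zeta_{ij},\eta)=\psi_{ij}(\eta)$ immediately forces $\zeta_{ij}=\nabla u_{g_i}^0\cdot\nabla u_{g_j}^0$, so that $V_n = \mathrm{span}\{\zeta_{ij}\}_{i,j=1}^m$.

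First I would solve the background Neumann problem explicitly on the unit disk. For $g=\cos(j\phi)/\sqrt\pi$ and $g=\sin(j\phi)/\sqrt\pi$ the solutions are the harmonic polynomials $u_j^{\mathrm c}=r^j\cos(j\phi)/(j\sqrt\pi)$ and $u_j^{\mathrm s}=r^j\sin(j\phi)/(j\sqrt\pi)$. Writing them as $\mathrm{Re}(z^j)/(j\sqrt\pi)$ and $\mathrm{Im}(z^j)/(j\sqrt\pi)$ with $z=re^{i\phi}$ and applying the Cauchy--Riemann relations, a direct computation yields
\begin{equation*}
\nabla u_i^{\mathrm c}\cdot\nabla u_j^{\mathrm c} = \nabla u_i^{\mathrm s}\cdot\nabla u_j^{\mathrm s} = \tfrac{r^{i+j-2}\cos((i-j)\phi)}{\pi},\qquad \nabla u_i^{\mathrm c}\cdot\nabla u_j^{\mathrm s} = -\tfrac{r^{i+j-2}\sin((i-j)\phi)}{\pi}.
\end{equation*}
Setting $k=|i-j|$ and noting that $i+j-2$ sweeps the set $\{k,k+2,\ldots,2n-k-2\}$ as $i,j$ range over $\{1,\ldots,n\}$, the space $V_n$ contains every function $r^{k+2l}\cos(k\phi)$ and $r^{k+2l}\sin(k\phi)$ with $0\leq k\leq n-1$ and $0\leq l\leq n-k-1$.

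Next I would recognise this collection through the Zernike polynomials and extract a clean polynomial inclusion. Since the Zernike radial polynomial $R_N^M(r)$ is a linear combination of $r^{|M|+2l}$ with $0\leq l\leq (N-|M|)/2$, the span above coincides, angular-mode by angular-mode, with the span of $R_N^M(r)\{\cos(M\phi),\sin(M\phi)\}$ subject to $N-|M|$ even, $|M|\leq N$ and $N+|M|\leq 2n-2$. Any Zernike polynomial with $N\leq n-1$ automatically satisfies $N+|M|\leq 2(n-1) = 2n-2$, so it lies in $V_n$. Because the Zernike polynomials of total degree at most $D$ form a basis of the space of polynomials of degree at most $D$ on the disk, this yields the key inclusion $\mathbb P_{n-1}(\Omega)\subseteq V_n$.

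Finally, since $P$ is the $L^2$-orthogonal projection onto $V_n$, the best-approximation property combined with the above inclusion gives
\begin{equation*}
\|P\kappa_{\text{true}}-\kappa_{\text{true}}\|_{L^2(\Omega)} = \inf_{v\in V_n}\|\kappa_{\text{true}}-v\|_{L^2(\Omega)} \leq \inf_{q\in\mathbb P_{n-1}(\Omega)}\|\kappa_{\text{true}}-q\|_{L^2(\Omega)}.
\end{equation*}
Appealing to a classical Jackson-type polynomial approximation theorem on the unit disk, $\inf_{q\in\mathbb P_D}\|\kappa-q\|_{L^2(\Omega)}\lesssim D^{-s}\|\kappa\|_{H^s(\Omega)}$ for $0\leq s\leq D+1$, and substituting $D = n-1 = (m-2)/2$ produces the stated bound; the constraint $s\leq m/2$ matches precisely the saturation range $s\leq D+1$. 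The hardest part will be the Zernike step: one has to track the parity constraint $N-|M|\in 2\mathbb Z$ carefully and verify that every Zernike polynomial with $N\leq n-1$ truly arises within $V_n$, so that the polynomial inclusion is genuine; once this is in hand, the remainder is a routine application of best approximation together with a standard Jackson estimate on the disk.
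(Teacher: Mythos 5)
Your proposal is correct and follows essentially the same route as the paper: explicit harmonic solutions on the disk, identification of $\zeta_{ij}=\nabla u_{g_i}^0\cdot\nabla u_{g_j}^0$ as $r^{i+j-2}\{\cos,\sin\}((i-j)\phi)/\pi$, inclusion of a full polynomial space via Zernike polynomials, and then the best-approximation property combined with a spectral (Jackson-type) estimate, which the paper takes from \cite[Remark 3.7]{jieshen2011}. In fact your index bookkeeping for which Zernike modes $(N,M)$ lie in the span is more explicit than the paper's one-line assertion, so no further changes are needed.
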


\begin{proof}
Since $\{g_j\}_{j=1}^m$ is the orthonormal trigonometric set  defined in \eqref{eq:tri}, setting $\sigma=1$,
the solution to  \eqref{eq:weak-solution} with the Neumann boundary \eqref{eq:tri} are given  by
\begin{equation*}
u_{g_j}^0=\left\{
\begin{aligned}
&\frac{1}{j\sqrt{\pi}}\sin(j\phi)r^j,\quad\mbox{if}\enspace g_j=\frac{1}{\sqrt{\pi}}\sin(j\phi),\\
&\frac{1}{j\sqrt{\pi}}\cos(j\phi)r^j,\quad\mbox{if}\enspace g_j=\frac{1}{\sqrt{\pi}}\cos(j\phi),
\end{aligned}\right.
\end{equation*}
with  gradients
\begin{equation*}
\nabla u_{g_j}^0=\left\{
\begin{aligned}
&\frac{r^{j-2}}{\sqrt{\pi}}
\begin{pmatrix}
\sin{j\phi}&-\cos{j\phi}\\
\cos{j\phi}&\sin{j\phi}
\end{pmatrix}
\begin{pmatrix}
x\\y
\end{pmatrix}
,\quad\mbox{if}\enspace g_j=\frac{1}{\sqrt{\pi}}\sin(j\phi),\\
&\frac{r^{j-2}}{\sqrt{\pi}}
\begin{pmatrix}
\cos{j\phi}&\sin{j\phi}\\
-\sin{j\phi}&\cos{j\phi}
\end{pmatrix}
\begin{pmatrix}
x\\y
\end{pmatrix}
,\quad\mbox{if}\enspace g_j=\frac{1}{\sqrt{\pi}}\cos(j\phi).
\end{aligned}\right.
\end{equation*}
Since $b(\cdot,\cdot)$ is the inner product in $L^2(\Omega)$, the functions $\zeta_{ij}$ defined in \eqref{variation4}  can be explicitly solved as
\begin{equation*}
\zeta_{ij}=\frac{r^{i+j-2}}{\pi} \sin{(i-j)\phi} \ \ \mbox{or} \ \ \zeta_{ij}= \frac{r^{i+j-2}}{\pi} \cos{(i-j)\phi},\quad 1\leq i,j\leq n, \ m=2n.
\end{equation*}
Noting that the set $\{\zeta_{ij} \mid 1\leq i,j\leq n, i+j\leq n+1\}$ can be expressed as linear combinations of Zernike polynomials with radial order less than $n-1$ \cite{Lakshminarayanana}. Since the Zernike polynomials constitute a complete orthogonal basis of
$L^2$ functions on the unit disk, together with
\begin{equation*}
\mbox{span}\{\hat{\zeta}_1,\hat{\zeta}_2,\ldots,\hat{\zeta}_{m'}\}= \mbox{span}\{\zeta_{11},\zeta_{12},\ldots,\zeta_{mm}\},
\end{equation*}
  it follows that  $\mbox{span}\{\hat{\zeta}_1,\hat{\zeta}_2,\ldots,\hat{\zeta}_{m'}\}$ contains all polynomials of degree less than  $n-1$.
Thus,  according to \cite[Remark 3.7]{jieshen2011}  the Galerkin projection error of $\kappa_{\text{true}}$ can be estimated as
\begin{equation*}
\|P\kappa_{\text{true}}-\kappa_{\text{true}}\|_{L^2(\Omega)} \lesssim  \left(\frac{m-2}{2}\right)^{-s} \|\kappa_{\text{true}}\|_{H^s(\Omega)}, \quad0\leq s\leq \frac{m}{2}.
\end{equation*}
\end{proof}

\begin{rem}
Theorem \ref{thm:projection} implies that
\begin{equation*}
\|P\kappa_{\text{true}}-\kappa_{\text{true}}\|_{L^2(\Omega)}\rightarrow 0 \quad \text{as} \quad m\rightarrow \infty.
\end{equation*}
This indicates that the error introduced by the Galerkin projection decreases as the number of measurements increases.
\end{rem}

Under the conditions of Theorem \ref{thm:projection}, the Neumann boundary condition
$g_j$  can be expressed as
\begin{equation*}
g_j=\left\{
\begin{aligned}
&\frac{1}{\sqrt{\pi}}\sin\left(\frac{j+1}{2}\phi\right), \quad \, \text{if $j$ is odd,} \\
&\frac{1}{\sqrt{\pi}}\cos\left(\frac{j}{2}\phi\right),  \quad \quad \ \  \text{if $j$ even,}
\end{aligned}
\right.\quad j=1,\ldots,m.
\end{equation*}
and the corresponding $\zeta_{ij}$ can be  represented as
\begin{equation*}
\zeta_{ij} = \left\{
\begin{aligned}
&\frac{r^{\frac{i+j-2}{2}}}{\pi}\cos\left(\frac{i-j}{2}\phi\right),&\text{if $i,j$ are odd,}\\
&\frac{r^{\frac{i+j-3}{2}}}{\pi}\sin\left(\frac{i-j+1}{2}\phi\right),&\text{if $i$ is odd and $j$ is even,}\\
&\frac{r^{\frac{i+j-3}{2}}}{\pi}\sin\left(\frac{j-i+1}{2}\phi\right),&\text{if $i$ is even and $j$ is odd,}\\
&\frac{r^{\frac{i+j-4}{2}}}{\pi}\cos\left(\frac{i-j}{2}\phi\right),&\text{if $i,j$ are even.}
\end{aligned}
\right.
\end{equation*}
According to the Zernike polynomial representation, we define the orthonormal basis  $\hat{\zeta}$ as
\begin{equation*}
\hat{\zeta}=\left(\frac{1}{\sqrt{2\pi}},\sqrt{\frac{3}{\pi}}r\sin\phi, \ldots,\sqrt{\frac{2m-3}{\pi}}r^{m-2}\right)^{\top}.
\end{equation*}
Using the definition of  $\zeta$ in \eqref{eq:zeta} and the relation $\mathbf{T}\hat \zeta=\zeta$ in \eqref{eq:T}, a straightforward calculation yields
\begin{equation}\label{eq:TT}
\mathbf{T}^{\top}\mathbf{T}=
\begin{pmatrix}
\frac{4}{\pi}\\
& \frac{4}{3\pi}\\
& & \frac{4}{3\pi}\\
& & & \frac{4}{5\pi}\\
& & & & \ddots\\
& & & & & \frac{4}{(2m-5)\pi}\\
& & & & & &\frac{4}{(2m-3)\pi}
\end{pmatrix}_{m'\times m'},
\end{equation}
where $m'={m^2}/{4}$ and $\mathbf{T}$ is given by

\begin{equation*}
\mathbf{T}=
\begin{pmatrix}
0& & & \\
0& & & \\
\sqrt{\frac{2}{\pi}}& & & \\
\sqrt{\frac{2}{\pi}}& & & \\
 &-\frac{1}{\sqrt{3\pi}}& & \\
 &-\frac{1}{\sqrt{3\pi}}& & \\
 &\frac{1}{\sqrt{3\pi}}& & \\
 &\frac{1}{\sqrt{3\pi}}& & \\
&  &\frac{1}{\sqrt{3\pi}}& & \\
& &\frac{1}{\sqrt{3\pi}}& & \\
& &\frac{1}{\sqrt{3\pi}}& & \\
& &\frac{1}{\sqrt{3\pi}}& & \\
& & & 0\\
& & & 0\\
& & & \sqrt{\frac{2}{5\pi}}\\
& & & \sqrt{\frac{2}{5\pi}}\\
& & & &\ddots &\\
& & & & &\frac{1}{\sqrt{(2m-5)\pi}}\\
& & & & &\frac{1}{\sqrt{(2m-5)\pi}}\\
& & & & &\frac{1}{\sqrt{(2m-5)\pi}}\\
& & & & &\frac{1}{\sqrt{(2m-5)\pi}}\\
& & & & & &0\\
& & & & & &0\\
& & & & & &\sqrt{\frac{2}{(2m-3)\pi}}\\
& & & & & &\sqrt{\frac{2}{(2m-3)\pi}}
\end{pmatrix}_{m^2\times m'}.
\end{equation*}
From \eqref{eq:TT},  it is clear to see that  the $m'$ eigenvalues of $\mathbf{T}^{\top}\mathbf{T}$ are arranged in descending order.
Consequently, the largest and smallest eigenvalues are
\begin{equation}\label{eq:lambda}
\lambda_1=\frac{4}{\pi},  \quad\lambda_{m'}=\frac{4}{(2m-3)\pi}.
\end{equation}


\subsection{ Error of the discretization}  In the second  part, we analyse the discretization error
between the reconstructed solution $\kappa_r$ and its numerical approximation $\kappa_r^h$ obtained via the finite element method.

\begin{thm}\label{thm1}
Let $\kappa_r\in H^{k+1}(\Omega),k\geq 0$ be the solution of \eqref{variation2} such that  $\|\kappa_r\|_{H^{k+1}(\Omega)} \lesssim \|\mathbf{V}\|_F$, and let $\kappa_r^h\in W_h$ be the solution of \eqref{eq:femsolution}. Then it  holds that
\begin{equation*}
\|\kappa_r-\kappa_r^h\|_{L^2(\Omega)} \lesssim h^{k+1} \|\mathbf{V}\|_F.
\end{equation*}
\end{thm}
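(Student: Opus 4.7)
The plan is to apply Céa's lemma adapted to the bilinear form $a(\cdot,\cdot)$ on $L^2(\Omega)$, combined with a standard piecewise polynomial approximation estimate. Since $W_h \subset L^2(\Omega)$ consists of (generally discontinuous) piecewise polynomials of degree at most $k$, every test function in the discrete problem \eqref{eq:femsolution} is also admissible in the continuous problem \eqref{variation2}. Subtracting the two equations immediately yields Galerkin orthogonality,
\begin{equation*}
a(\kappa_r - \kappa_r^h, \eta) = 0, \quad \forall\, \eta \in W_h.
\end{equation*}

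Next, I would invoke Lemma \ref{lema}, which guarantees that $a(\cdot,\cdot)$ is continuous and coercive on $L^2(\Omega)$ with constants $C, C_1 > 0$ independent of the mesh. For arbitrary $\eta \in W_h$, write $\kappa_r - \kappa_r^h = (\kappa_r - \eta) + (\eta - \kappa_r^h)$ and use coercivity together with Galerkin orthogonality (noting $\eta - \kappa_r^h \in W_h$) to get
\begin{equation*}
C_1 \|\kappa_r - \kappa_r^h\|_{L^2(\Omega)}^2 \leq a(\kappa_r - \kappa_r^h, \kappa_r - \kappa_r^h) = a(\kappa_r - \kappa_r^h, \kappa_r - \eta) \leq C \|\kappa_r - \kappa_r^h\|_{L^2(\Omega)} \|\kappa_r - \eta\|_{L^2(\Omega)}.
\end{equation*}
Dividing by $\|\kappa_r - \kappa_r^h\|_{L^2(\Omega)}$ produces the quasi-best approximation estimate
\begin{equation*}
\|\kappa_r - \kappa_r^h\|_{L^2(\Omega)} \lesssim \inf_{\eta \in W_h} \|\kappa_r - \eta\|_{L^2(\Omega)}.
\end{equation*}

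To bound the infimum, I would choose $\eta$ to be the element-wise $L^2$-orthogonal projection $\Pi_h \kappa_r$ of $\kappa_r$ onto $W_h$, or equivalently any standard quasi-interpolation operator on the quasi-uniform triangulation $\mathcal{T}_h$. Since $W_h$ has no continuity constraints across elements, the projection can be computed element by element using the local polynomial space $\mathcal{P}_k(T)$. The classical Bramble–Hilbert lemma applied on each $T \in \mathcal{T}_h$, combined with a scaling argument from the reference element, yields the local estimate $\|\kappa_r - \Pi_h \kappa_r\|_{L^2(T)} \lesssim h_T^{k+1} |\kappa_r|_{H^{k+1}(T)}$; summing over the quasi-uniform mesh gives the global bound
\begin{equation*}
\|\kappa_r - \Pi_h \kappa_r\|_{L^2(\Omega)} \lesssim h^{k+1} \|\kappa_r\|_{H^{k+1}(\Omega)}.
\end{equation*}
Finally, the regularity hypothesis $\|\kappa_r\|_{H^{k+1}(\Omega)} \lesssim \|\mathbf{V}\|_F$ converts this into the claimed bound $\|\kappa_r - \kappa_r^h\|_{L^2(\Omega)} \lesssim h^{k+1} \|\mathbf{V}\|_F$.

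I do not anticipate a significant technical obstacle: this is a textbook Céa argument, simplified by the fact that we are already working in $L^2$, so no Aubin–Nitsche duality is needed and the approximation rate is the natural one dictated by piecewise polynomial interpolation in $L^2$. The only subtle point worth stating clearly is that the continuity and coercivity constants from Lemma \ref{lema} are independent of $h$, so the hidden constant in the final estimate depends only on the shape-regularity of $\mathcal{T}_h$, on $k$, on $\Omega$, on the background potentials $u_{g_i}^0$, and on $\alpha$ through $b(\cdot,\cdot)$, but not on the mesh parameter $h$ itself.
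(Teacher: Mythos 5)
Your proposal is correct and follows essentially the same route as the paper: coercivity of $a(\cdot,\cdot)$ from Lemma \ref{lema}, Galerkin orthogonality between \eqref{variation2} and \eqref{eq:femsolution}, continuity to obtain the quasi-best-approximation bound, and then the standard $O(h^{k+1})$ piecewise-polynomial approximation estimate combined with the hypothesis $\|\kappa_r\|_{H^{k+1}(\Omega)} \lesssim \|\mathbf{V}\|_F$. The only cosmetic difference is that you realize the approximation via the elementwise $L^2$ projection and Bramble--Hilbert, whereas the paper invokes a generic interpolation operator $I^h$ with the same estimate cited from Brenner--Scott.
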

\begin{proof}
We define an interpolation operator $I^h:H^{k+1}(\Omega)\rightarrow W_h$,  one can find that
$(I^h \kappa_r-\kappa_r^h)\in W_h$. This follows that
\begin{equation*}
\begin{aligned}
C_1\|\kappa_r-\kappa_r^h\|_{L^2(\Omega)}^2&\leq a(\kappa_r-\kappa_r^h,\kappa_r-\kappa_r^h)\\
&=a(\kappa_r-\kappa_r^h,\kappa_r-I^h \kappa_r) +a(\kappa_r-\kappa_r^\tau,I^h \kappa_r-\kappa_r^h)\\
&=a(\kappa_r-\kappa_r^h,\kappa_r-I^h \kappa_r)\\
&\lesssim\|\kappa_r-\kappa_r^h\|_{L^2(\Omega)}\|\kappa_r-I^h \kappa_r\|_{L^2(\Omega)},
\end{aligned}
\end{equation*}
together with  $\|\kappa_r-I^h \kappa_r\|_{L^2(\Omega)} \lesssim  h^{k+1}\|\kappa_r\|_{H^{k+1}(\Omega)}$
in \cite{SL}, one obtains
\begin{equation*}
\|\kappa_r-\kappa_r^h\|_{L^2(\Omega)}\lesssim  h^{k+1} \|\kappa_r\|_{H^{k+1}(\Omega)} \lesssim h^{k+1}\|\mathbf{V}\|_F.
\end{equation*}
\end{proof}

The main result of this section is stated in the following theorem.
\begin{thm}\label{thm:main}
Assume that the bilinear form $b(\cdot,\cdot)$ is chosen as the inner product in $L^2(\Omega)$ and a special set of orthonormal trigonometric functions \eqref{eq:S} is employed as the boundary input in \eqref{variation4}.
If $\kappa_{\text{true}}\in H^s(\Omega)$ with $0\leq s\leq\frac{m}{2}$ is the exact solution of \eqref{equ}, $\kappa_r \in H^{k+1}(\Omega)$ is the solution of \eqref{variation2} and $\kappa_r^h\in W_h$ is the solution of \eqref{eq:femsolution}, then the following overall error estimate holds:

\begin{equation*}
\begin{aligned}
\|\kappa_{\text{true}}&-\kappa_r^h\|_{L^2(\Omega)}
\\
 \lesssim & \left( \left(\frac{m-2}{2}\right)^{-s} \|\kappa_{\text{true}}\|_{H^s(\Omega)}+ \left(h^{k+1}+\sqrt{\alpha^2+\frac{\delta^2}{\alpha^2} +\left(1+\frac{1}{\alpha^2}\right)\delta}\right)\|\mathbf{V}\|_F\right).
\end{aligned}
\end{equation*}

\end{thm}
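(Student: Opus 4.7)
The plan is to reduce the theorem to the three estimates already established in this section: Theorem \ref{Pk} controls the reconstruction error relative to the Galerkin projection, Theorem \ref{thm:projection} controls the projection error, and Theorem \ref{thm1} controls the finite element discretization error. The whole argument will amount to inserting the projection $P\kappa_{\text{true}}$ of $\kappa_{\text{true}}$ onto $\mbox{span}\{\hat{\zeta}_1,\ldots,\hat{\zeta}_{m'}\}$ as a pivot in a triangle inequality, much as was done in \eqref{eq:error1} for the pure reconstruction error.

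Concretely, I would begin from the splitting
\begin{equation*}
\|\kappa_{\text{true}}-\kappa_r^h\|_{L^2(\Omega)} \leq \|\kappa_{\text{true}}-P\kappa_{\text{true}}\|_{L^2(\Omega)} + \|P\kappa_{\text{true}}-\kappa_r\|_{L^2(\Omega)} + \|\kappa_r-\kappa_r^h\|_{L^2(\Omega)},
\end{equation*}
and then estimate the three terms in turn. For the first term I would invoke Theorem \ref{thm:projection}, whose hypotheses ($\kappa_{\text{true}}\in H^s(\Omega)$ with $0\leq s\leq m/2$, together with the trigonometric boundary inputs \eqref{eq:tri}) are exactly those inherited from the present statement; this contributes $((m-2)/2)^{-s}\|\kappa_{\text{true}}\|_{H^s(\Omega)}$. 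For the second term I would appeal directly to Theorem \ref{Pk}, producing $\|\mathbf{V}\|_F\sqrt{\alpha^2+\delta^2/\alpha^2+(1+1/\alpha^2)\delta}$. For the third term I would use Theorem \ref{thm1} together with the regularity assumption $\kappa_r\in H^{k+1}(\Omega)$ and the a priori bound $\|\kappa_r\|_{H^{k+1}(\Omega)}\lesssim\|\mathbf{V}\|_F$ built into its hypotheses, giving $h^{k+1}\|\mathbf{V}\|_F$. Summing the three contributions yields precisely the stated inequality.

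There is no substantive obstacle here, since the heavy lifting has already been completed in the component theorems; at this stage the argument is essentially bookkeeping. The one subtlety worth flagging is that the implicit constant in Theorem \ref{Pk} was expressed in terms of the extremal eigenvalues $\lambda_1$ and $\lambda_{m'}$ of $\mathbf{T}^{\top}\mathbf{T}$, which under the trigonometric basis are pinned down by \eqref{eq:lambda} as $4/\pi$ and $4/((2m-3)\pi)$. Consequently the hidden $\lesssim$ constant in the final estimate carries an $m$-dependence through the ratio $\lambda_1/\lambda_{m'}$, a point that matters when the bound is subsequently combined with the regularization-parameter selection rule advertised in the introduction.
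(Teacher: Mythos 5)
Your proposal matches the paper's proof exactly: the same triangle inequality through $P\kappa_{\text{true}}$ and $\kappa_r$, with the three terms bounded by Theorems \ref{thm:projection}, \ref{Pk}, and \ref{thm1} respectively. Your remark about the $m$-dependence of the hidden constant via $\lambda_1/\lambda_{m'}$ is a fair observation that the paper leaves implicit, but it does not alter the argument.
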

\begin{proof}
According to Theorem \ref{Pk},  \ref{thm:projection} and  \ref{thm1},  we have
\begin{equation*}
\begin{aligned}
\|\kappa_{\text{true}}&-\kappa_r^h\|_{L^2(\Omega)}\leq \|\kappa_{\text{true}}-P\kappa_{\text{true}}\|_{L^2(\Omega)}+ \|P\kappa_{\text{true}}-\kappa_r\|_{L^2(\Omega)}+ \|\kappa_r-\kappa_r^h\|_{L^2(\Omega)}\\
\lesssim&\left( \left(\frac{m-2}{2}\right)^{-s} \|\kappa_{\text{true}}\|_{H^s(\Omega)}+ \left(h^{k+1}+\sqrt{\alpha^2+\frac{\delta^2}{\alpha^2} +\left(1+\frac{1}{\alpha^2}\right)\delta}\right)\|\mathbf{V}\|_F\right).
\end{aligned}
\end{equation*}
\end{proof}
\begin{rem}\label{rem:alpha}
To derive a more precise error estimate and establish a criterion for selecting the regularization parameter $\alpha$, we define $h_\delta(\alpha)$ as the penultimate term in equation \eqref{error}, that is,
\begin{align*}
h_\delta(\alpha)
&=\frac{\alpha^2\lambda_{1}}{\lambda_{m'}^2(\lambda_{m'}+\alpha)^2}\|\mathbf{V}\|_F^2+ \frac{\lambda_{1}}{(\lambda_{m'}+\alpha)^2} \delta ^2 \|\mathbf{V}\|_F^2 +\frac{(4\lambda_{m'}+2\alpha)\lambda_{1}}{\lambda_{m'}(\lambda_{m'}+\alpha)^2}  \delta\|\mathbf{V}\|_F^2\\
&=\lambda_1\left(\frac{(1+\delta)^2}{(\alpha+\lambda_{m'})^2} +\frac{2(\delta-1)}{\lambda_{m'}(\alpha+\lambda_{m'})} +\frac{1}{\lambda_{m'}^2}\right)\|\mathbf{V}\|_F^2.
\end{align*}
By differentiating $h_\delta(\alpha)$, one obtains that $h_\delta(\alpha)$ attains its minimum  when the noise level satisfies $\delta<1$. Using formula  \eqref{eq:lambda},  the value of $\alpha$ that minimizes the error is given by
\begin{equation}\label{eq:alpha}
\alpha=\frac{\lambda_{m'}(3+\delta)\delta} {1-\delta}=\frac{4(3+\delta)\delta} {(2m-3)\pi(1-\delta)}.
\end{equation}
Thus, we obtain an optimal criterion for selecting the regularization parameter $\alpha$.
Hence,  using  the above $\alpha$, the total error estimate in Theorem \ref{thm:main} can be represented as
\begin{equation*}
\|\kappa_{\text{true}}-\kappa_r^h\|_{L^2(\Omega)}
\lesssim  \left( \left(\frac{m-2}{2}\right)^{-s} \|\kappa_{\text{true}}\|_{H^s(\Omega)}+ \left(h^{k+1}+ \frac{(2m-3)\sqrt{\pi\delta}}{(1+\delta)}\right)\|\mathbf{V}\|_F\right).
\end{equation*}
\end{rem}

\section{Numerical experiments}
In this section, several numerical examples are presented to illustrate the effectiveness of our proposed method. Here, we employ the orthonormal trigonometric set $\{g_j\}_{j=1}^m$ defined in \eqref{eq:tri} as the boundary current densities. The corresponding background electric potential and its gradient are given in Theorem \ref{thm:projection}.

\subsection{Numerical method}
Since our goal is to reconstruct the support of the conductivity rather than its precise values, we adopt a piecewise constant finite element space for computational simplicity. A natural choice of basis is the set of characteristic functions.
We begin by partitioning the domain $\Omega$ into $M$ disjoint open partitions $\{P_j,\text{diam}P_j\leq h\}_{j=1}^M$,  such that
\begin{equation*}
\bar{\Omega}=\bigcup_{j=1}^M \bar{P}_j,\quad P_i\bigcap P_j=\emptyset, \ i\neq j.
\end{equation*}
The finite element space $W_h$ is then defined as
\begin{equation*}
W_h=\text{span}\{\chi_{P_1},\chi_{P_2},\cdots,\chi_{P_M}\},
\end{equation*}
where $\chi_{P_j}$ is the characteristic function of subset $P_j$, given by
\begin{equation*}
\chi_{P_j}(x)=\left\{
\begin{aligned}
&1,\quad x\in P_j,\\
&0,\quad x\in\Omega\backslash P_j,
\end{aligned}\right.
\quad j=1,\ldots,M.
\end{equation*}
Let $\kappa_r^h\in W_h$ be the finite element approximation of $\kappa_r$ defined in  \eqref{eq:femsolution}, which is given by
\begin{equation*}
\kappa_r^h=\sum_{j=1}^M\mu_j\chi_{P_j}.
\end{equation*}
Substituting this expansion into \eqref{eq:femsolution}, it yields the equivalent formula
\begin{equation*}
a(\kappa_r^h,\chi_{P_j})=l(\chi_{P_j}),\quad j=1,\ldots,M.
\end{equation*}
The coefficient vector $\mu:=(\mu_1,\ldots,\mu_M)^T$ is therefore determined by the linear system:
\begin{equation}\label{FEM}
\mathbf{B}\mu=L,
\end{equation}
where the stiff matrix $\mathbf{B}$ and the load vector $L$ are defined by
\begin{equation*}
\mathbf{B}=\left(a(\chi_{P_i},\chi_{P_j})\right)_{i,j=1}^M,\quad L=(l(\chi_{P_1}),\ldots,l(\chi_{P_M}))^{\top}.
\end{equation*}
In our numerical example, the bilinear form $b(\cdot,\cdot)$ is chosen as the inner product in $L^2(\Omega)$:
\begin{equation*}
b(f,g)=\int_\Omega fg\, \mbox{d}x,\quad \forall f, g  \in L^2(\Omega).
\end{equation*}
Using the characteristic basis functions $\chi_{P_i}$, the stiffness matrix $\mathbf{B}$ and load vector $L$ can be computed as follows:
\begin{align*}
&a(\chi_{P_i},\chi_{P_j}) =\mbox{tr}\left(\mathbf{S}(\chi_{P_i})^{\top}\mathbf{S}(\chi_{P_j})\right) +\alpha\int_\Omega\chi_{P_i}\chi_{P_j}\mbox{d}x\\
&\qquad  \qquad \ \ \, =\sum_{k=1}^m\sum_{l=1}^m\int_{P_i}\nabla u_{g_k}^0\cdot\nabla u_{g_l}^0\mbox{d}x\int_{P_j}\nabla u_{g_k}^0\cdot\nabla u_{g_l}^0\mbox{d}x+\alpha\delta_{ij}|P_i|,\\
&l(\chi_{P_j})=\mbox{tr}\left(\mathbf{V}^{\top}\mathbf{S}(\chi_{P_j})\right) =\sum_{k=1}^{m}\sum_{l=1}^{m}\mathbf{V}_{kl}\int_{P_j}\nabla u_{g_k}^0\cdot\nabla u_{g_l}^0\mbox{d}x,
\end{align*}
where $\delta_{ij}$ is the Kronecker delta and $|P_i|$ denotes the area of partition $P_i$. For simplification, we denote the integral
\begin{equation*}
A_{kl}^j=\int_{P_j}\nabla u_{g_k}^0\cdot\nabla u_{g_l}^0\mbox{d}x,
\end{equation*}
and define the matrix $\mathbf{A}$ and $\mathbf{P}$ as
\begin{equation*}
\mathbf{A}=
\begin{pmatrix}
A_{11}^1&A_{11}^2&\cdots&A_{11}^M\\
\vdots&\vdots& &\vdots\\
A_{1m}^1&A_{1m}^2&\cdots&A_{1m}^M\\
\vdots&\vdots& &\vdots\\
A_{m1}^1&A_{m1}^2&\cdots&A_{m1}^M\\
\vdots&\vdots& &\vdots\\
A_{mm}^1&A_{mm}^2&\cdots&A_{mm}^M\\
\end{pmatrix},\quad
\mathbf{P}=
\begin{pmatrix}
|P_1|& & & \\
 &|P_2|& & \\
 & &\ddots& \\
 & & & |P_M|
\end{pmatrix}.
\end{equation*}
The stiffness matrix $\mathbf{B}$ and load vector $L$ can then be expressed in matrix form as
\begin{equation*}
\mathbf{B}=\mathbf{A}^{\top}\mathbf{A}+\alpha\mathbf{P},\quad L=\mathbf{A}^{\top}\mbox{vec}(\mathbf{V}).
\end{equation*}
Thus, the linear system \eqref{FEM} becomes
\begin{equation*}
\left(\mathbf{A}^{\top}\mathbf{A}+\alpha\mathbf{P}\right)\mu =\mathbf{A}^{\top}\mbox{vec}(\mathbf{V}),
\end{equation*}
and the coefficient vector $\mu$ is given by
\begin{equation*}
\mu=\left(\mathbf{A}^{\top}\mathbf{A}+\alpha\mathbf{P}\right)^{-1} \mathbf{A}^{\top}\mbox{vec}(\mathbf{V}).
\end{equation*}

\begin{figure}
\centering

\begin{minipage}[b]{\textwidth}
\centering
\subfigure[exact shape]{
\includegraphics[width=0.3\textwidth]{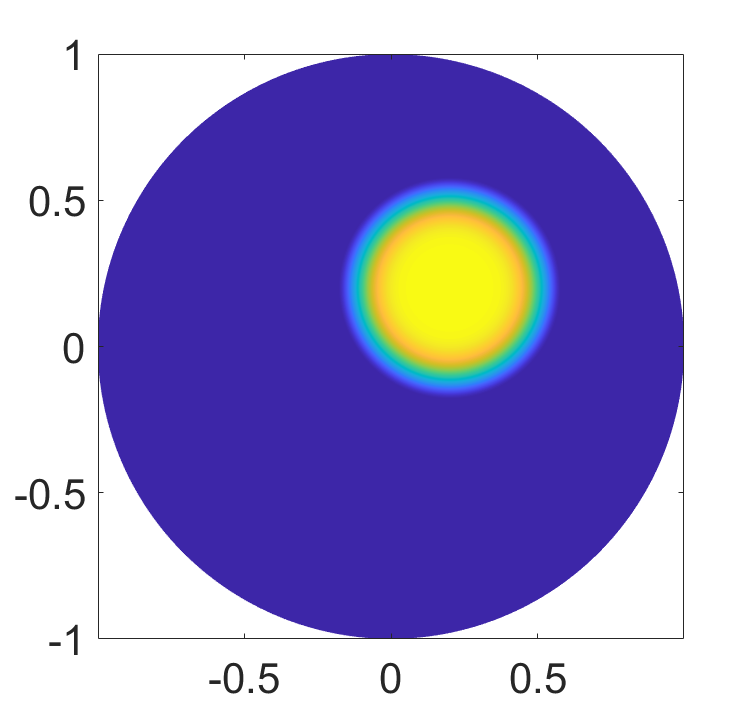}}
\subfigure[$\alpha=10^{-1}$]{
\includegraphics[width=0.3\textwidth]{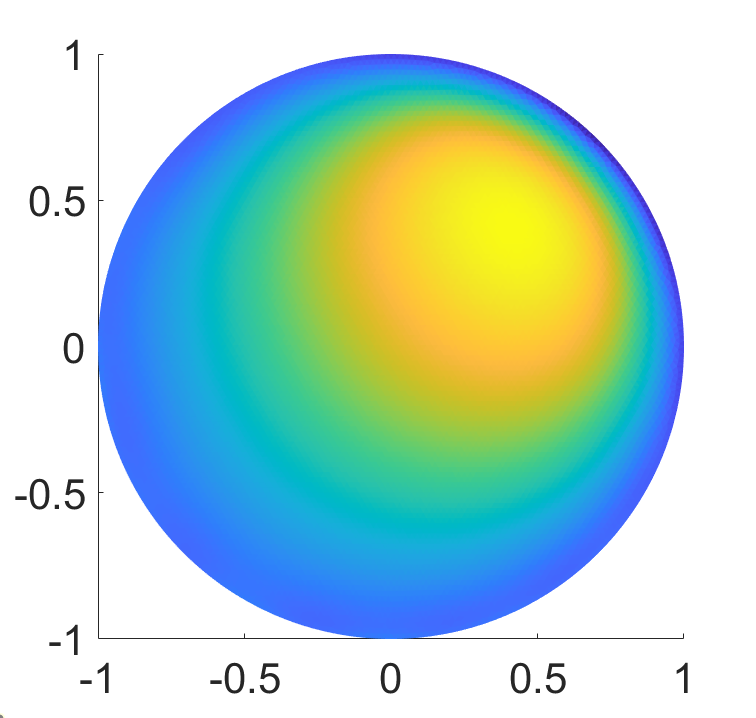}}\\
\subfigure[$\alpha=10^{-5}$]{
\includegraphics[width=0.3\textwidth]{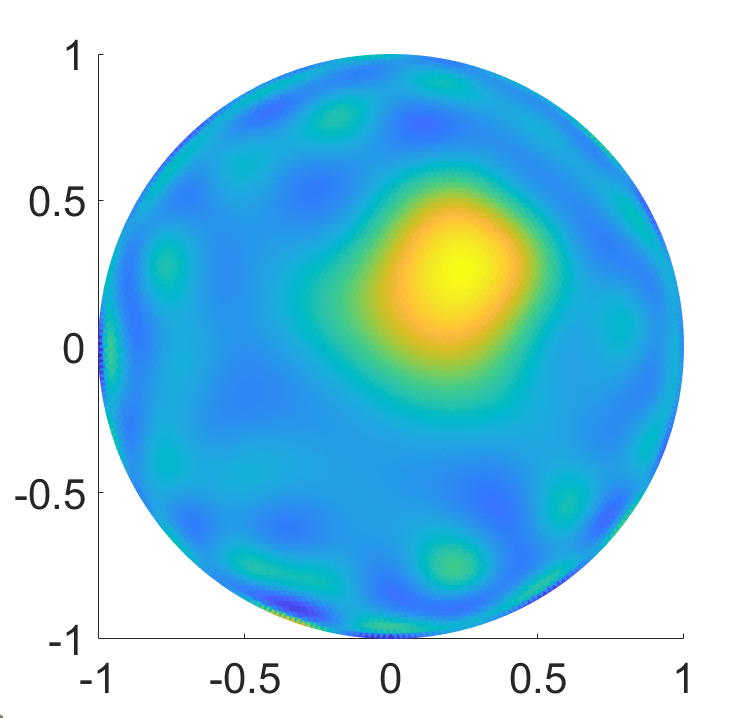}}
\subfigure[$\alpha=3.3506\times 10^{-3}$]{
\includegraphics[width=0.3\textwidth]{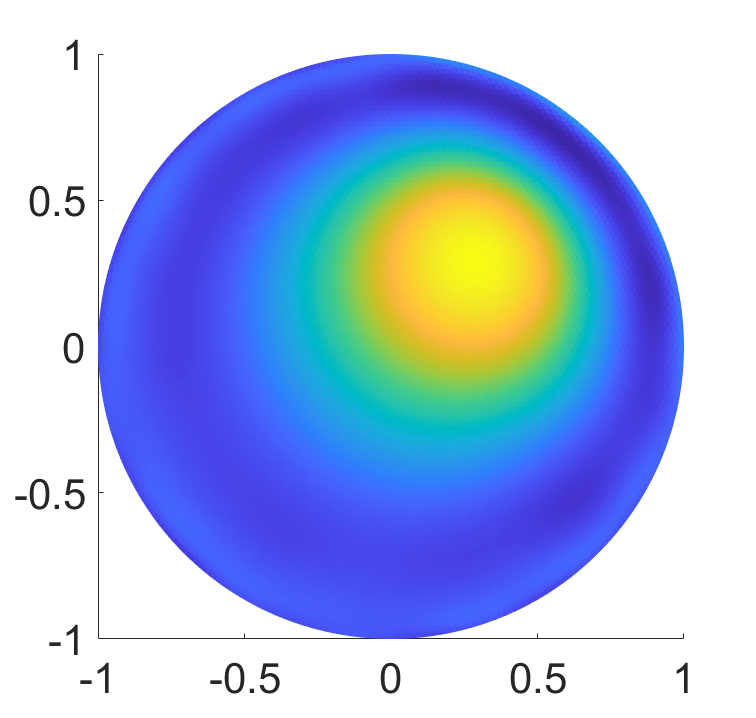}}
\caption{\label{fig-circle}Reconstruction of a smooth circular object under different regularization parameter $\alpha$, with noise level $\delta=5\%$.}
\end{minipage}

\vspace{0.5cm}

\begin{minipage}[b]{\textwidth}
\centering
\subfigure[exact shape]{
\includegraphics[width=0.3\textwidth]{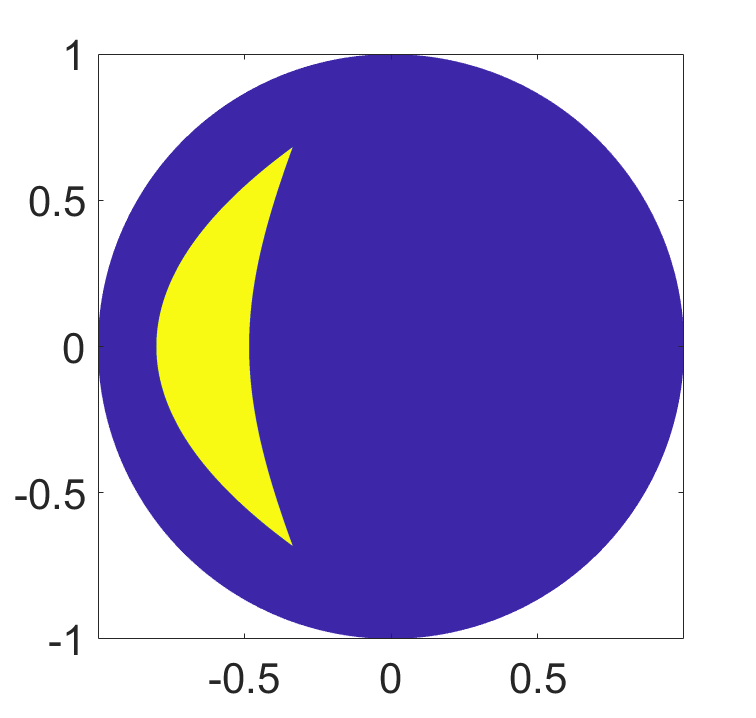}}
\subfigure[$\alpha=10^{-2}$]{
\includegraphics[width=0.3\textwidth]{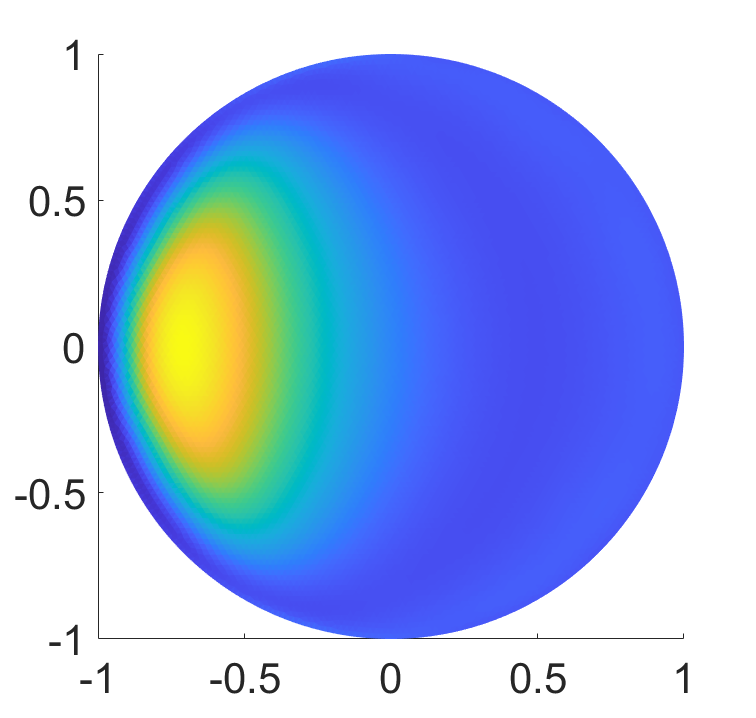}}\\
\subfigure[$\alpha=10^{-6}$]{
\includegraphics[width=0.3\textwidth]{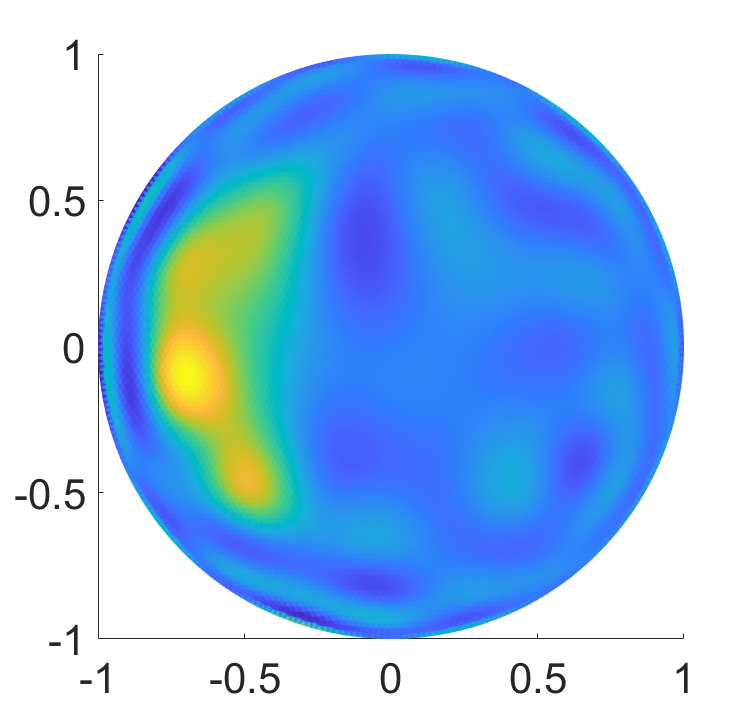}}
\subfigure[$\alpha=6.3462\times 10^{-4}$]{
\includegraphics[width=0.3\textwidth]{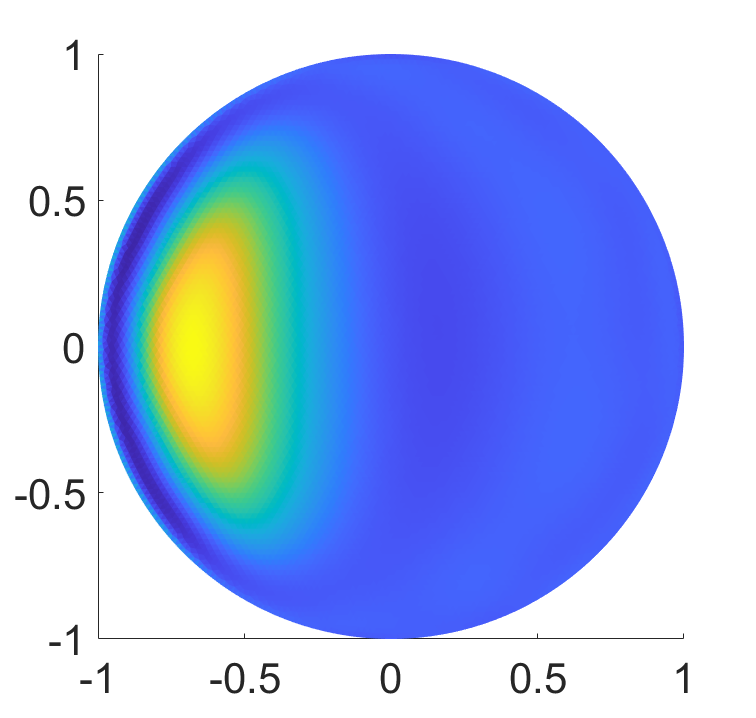}}
\caption{\label{fig-arch} Reconstruction of an arch-shaped object  under different regularization parameter $\alpha$, with noise level $\delta=1\%$.}
\end{minipage}
\end{figure}

\subsection{Numerical examples}

In the following numerical examples, we set the number of boundary currents to $m=32$, and employ a triangular mesh with mesh size $h=0.02$.  Unless otherwise stated, the regularization parameter $\alpha$ is chosen according to \eqref{eq:alpha}, which only depends on the number of boundary currents  $m$ and the noise level $\delta$.

\begin{example}
In the first example, we investigate reconstructions obtained using different values of the regularization parameter
$\alpha$. Figure \ref{fig-circle} presents the reconstructed continuous conductivity for various choices of
$\alpha$, while Figure \ref{fig-arch} shows the corresponding piecewise constant reconstructions.
These results indicate that selecting a regularization parameter that is too small or too large leads to inaccurate reconstructions.
To address this issue, we use  the regularization parameter selecting criterion $\alpha$ given in \eqref{eq:alpha}. From \ref{fig-circle}(d) and \ref{fig-arch}(d), one can find that our method demonstrate a good performance under this special regularization parameter.
\end{example}

\begin{figure}
\centering
\subfigure[exact shape]{
\includegraphics[width=0.3\textwidth]{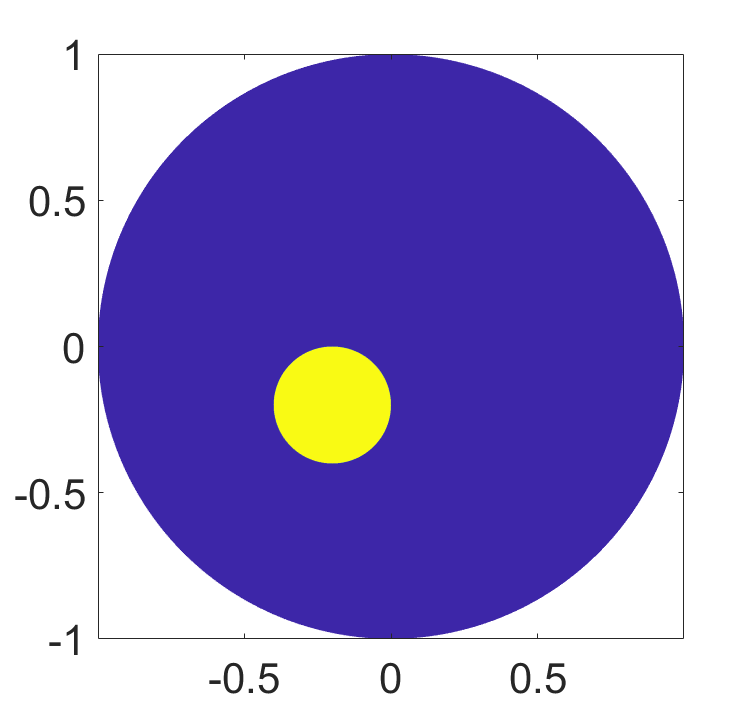}}
\subfigure[iterative method]{
\includegraphics[width=0.3\textwidth]{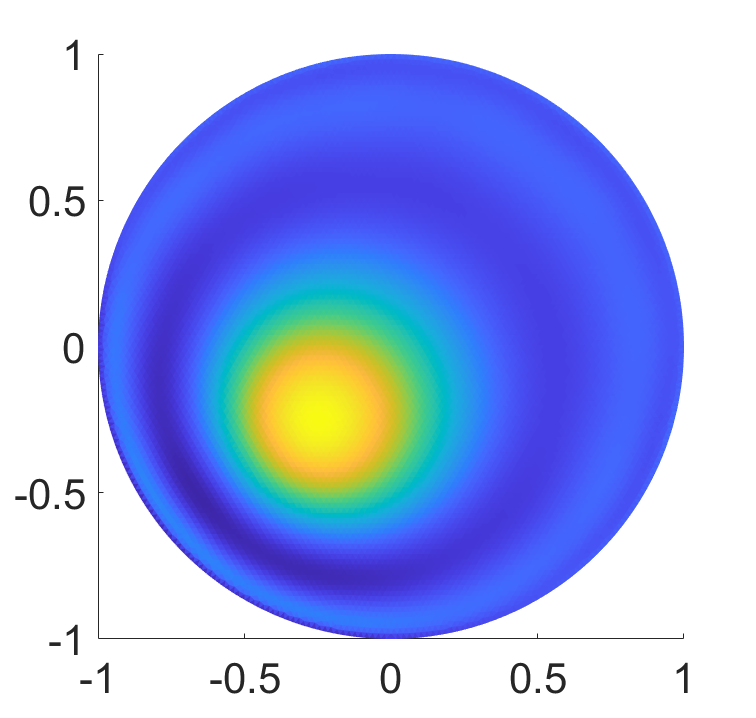}}
\subfigure[finite element method]{
\includegraphics[width=0.3\textwidth]{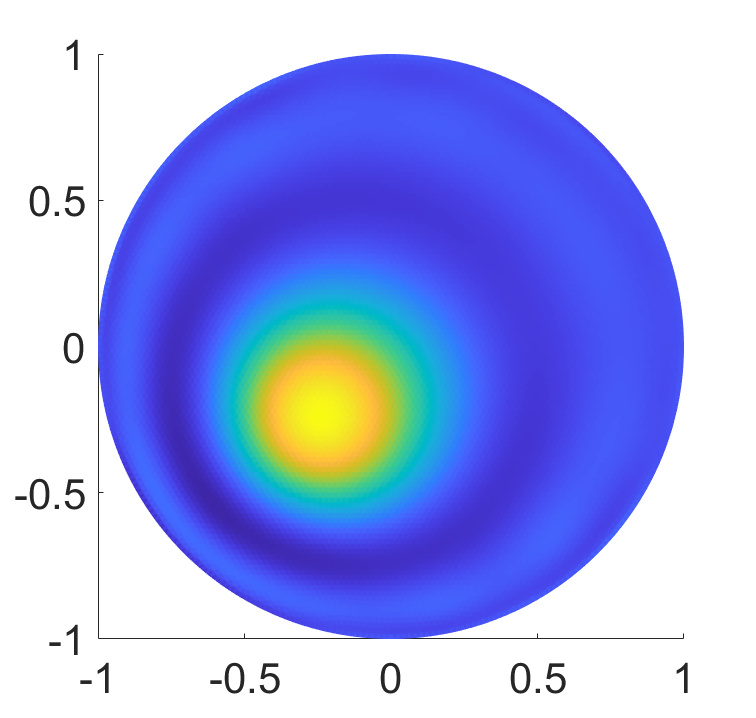}}\\
\subfigure[exact shape]{
\includegraphics[width=0.3\textwidth]{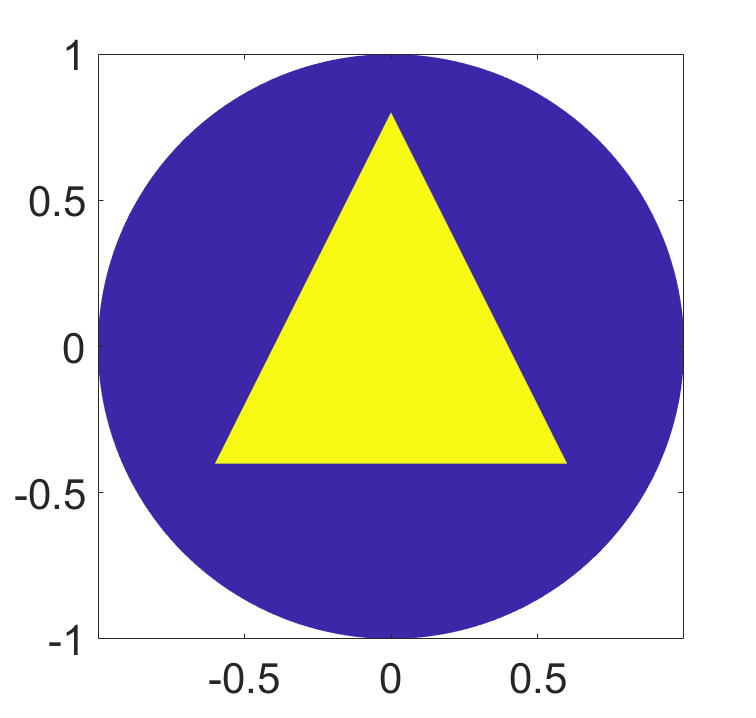}}
\subfigure[iterative method]{
\includegraphics[width=0.3\textwidth]{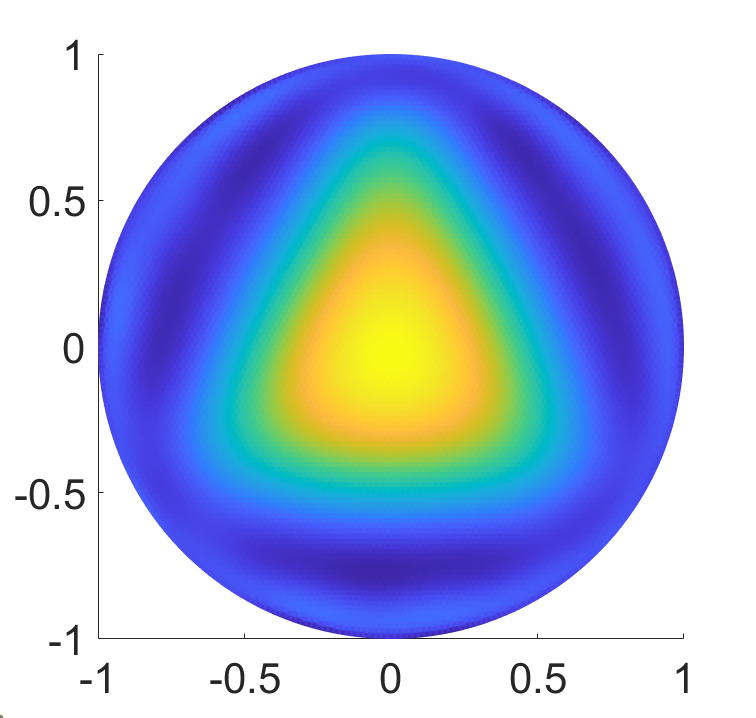}}
\subfigure[finite element method]{
\includegraphics[width=0.3\textwidth]{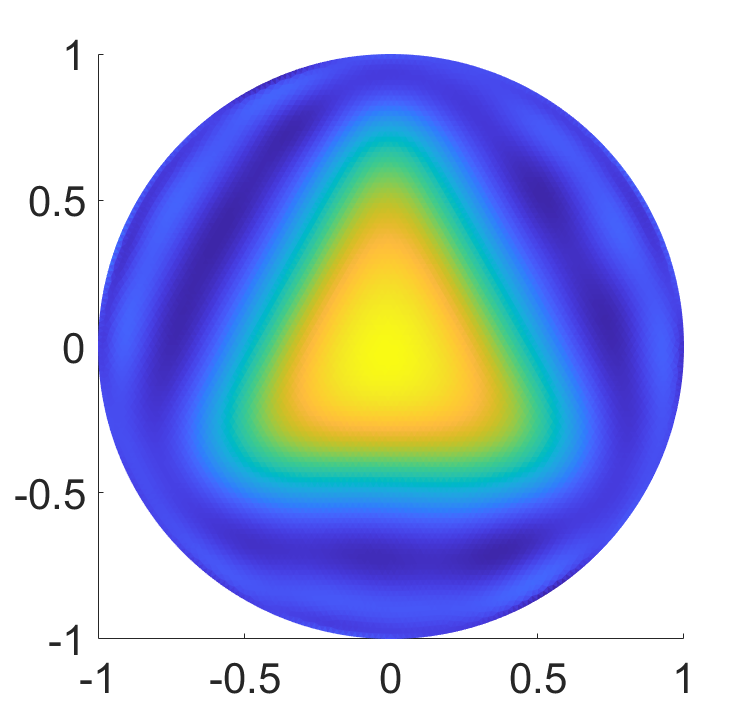}}
\caption{\label{circle} Reconstruction results obtained using the iterative method and the finite element method.}
\end{figure}

\begin{example}
In the second example, we compare our proposed finite element method   with  the iterative method in \eqref{min},
\begin{equation*}
\kappa_r=\mathop{\mbox{arg min}}\limits_{\kappa\in L^2(\Omega)}\|\mathbf{V}-\mathbf{S}(\kappa)\|_F^2 +\alpha \|\kappa\|_{L^2(\Omega)}.
\end{equation*}
To ensure consistency, the noise level is set to $\delta=1\%$, thereby the corresponding regularization parameter computed as $\alpha=6.3462\times 10^{-4}$.  From Figure \ref{circle},  we  can observe that both the  iterative method and the finite element method yield satisfactory reconstruction results. However, the finite element method does not require any iterations or an initial guess, allowing it to directly determine the shape of the conductivity. Moreover, by comparing the center and right columns of Figure \ref{circle}, one can see that the finite element method achieves better resolution in characterizing the boundaries of the object.

\end{example}

\begin{figure}
\centering
\subfigure[exact shape]{
\includegraphics[width=0.3\textwidth]{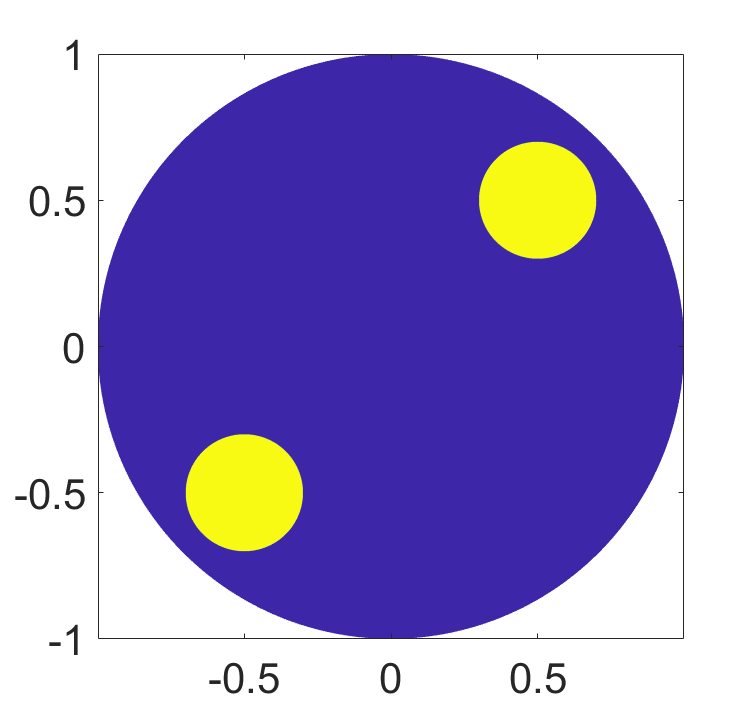}}
\subfigure[reconstruction]{
\includegraphics[width=0.3\textwidth]{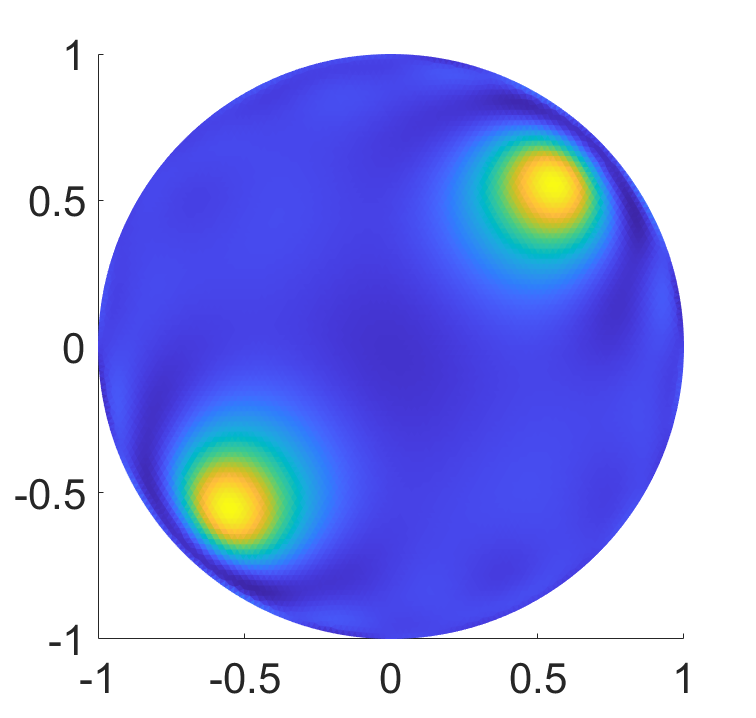}}\\
\subfigure[exact shape]{
\includegraphics[width=0.3\textwidth]{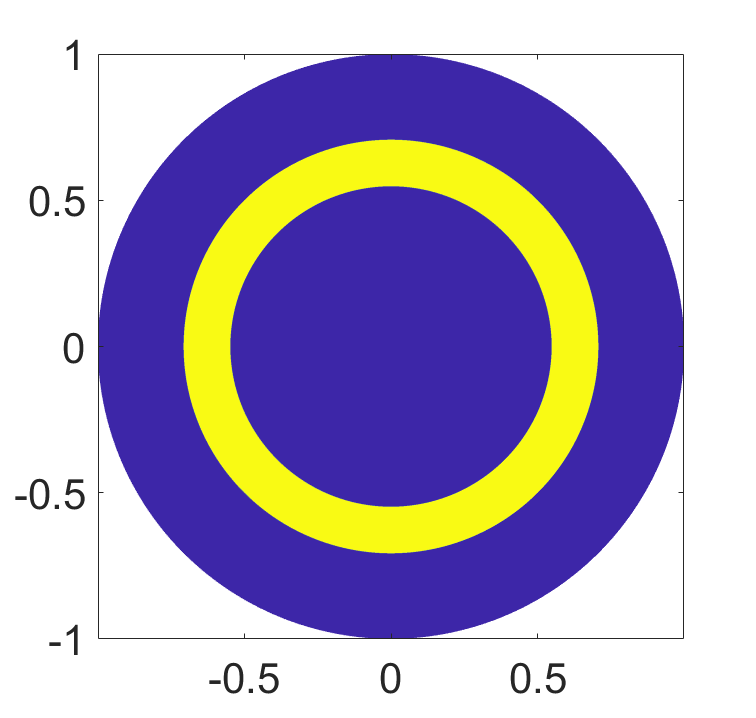}}
\subfigure[reconstruction]{
\includegraphics[width=0.3\textwidth]{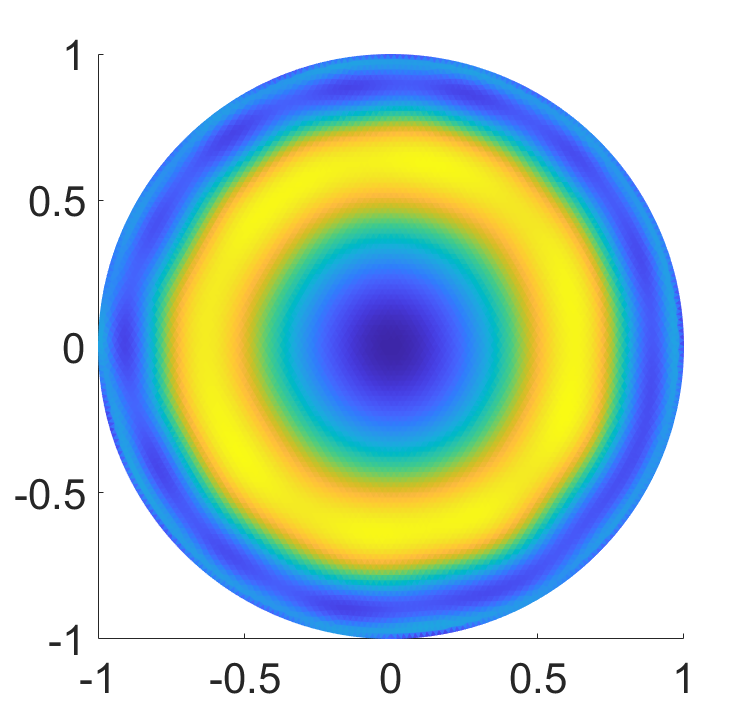}}
\caption{\label{shape} Reconstruction of a loop-shaped object and two circle-shape object.}
\end{figure}

\begin{example}
In the previous examples, we demonstrated that the proposed method recovers conductivity distributions containing single inclusions of various geometric shapes.  In the final example, we consider the finite element method for reconstructing more complex objects. Here the noise level is chosen as $\delta=1\%$, and the corresponding regularization parameter is $\alpha=6.3462\times 10^{-4}$.  Figure \ref{shape} demonstrates that the method also remains effective for shape reconstructions involving disconnected or multiply connected domains.
\end{example}

\end{document}